\documentclass{interact}

\usepackage{anyfontsize}
\usepackage{mathtools}
\usepackage{hyperref}
\usepackage{color}
\usepackage{biblatex}
\renewcommand{\email}[1]{\href{mailto:#1}{\texttt{#1}}}
\newcommand{\orcid}[1]{\href{https://orcid.org/#1}{\texttt{#1}}}

\newtheorem{theorem}{Theorem}
\newtheorem*{theorem*}{Theorem}
\newtheorem{proposition}{Proposition}
\theoremstyle{definition}

\newtheorem{lemma}{Lemma}
\newtheorem{conj}{Conjecture}

\makeatletter
\def\thm@space@setup{%
  \thm@preskip=\parskip
  \thm@postskip=0pt
}
\newcommand{\monthyear}[1]{%
  \def\@monthyear{\uppercase{#1}}}
\newcommand{\volnumber}[1]{%
  \def\@volnumber{\uppercase{#1}}}
\makeatother

\title{On periods of the Tribonacci sequence modulo primes}
\author{
\name{
    Liam Baker\textsuperscript{a}\thanks{Corresponding author. Email address: \email{liambaker@sun.ac.za} \ \textbar\ ORCID: \orcid{0000-0003-2728-3963}} \and
    Florian Luca\textsuperscript{b}\thanks{Email address: \email{fluca@sun.ac.za} \ \textbar\ ORCID: \orcid{0000-0003-1321-4422}} \and
    Kerry Porrill\textsuperscript{c}\thanks{Email address: \email{kerry.porrill@gmail.com} \ \textbar\ ORCID: \orcid{0009-0002-1081-2128}}
}
\affil{
    \textsuperscript{a,b,c}Department of Mathematical Sciences,
    Stellenbosch University,
    Stellenbosch, South Africa \\
    \textsuperscript{b}Max-Planck Institute for Software Systems, Saarbr\"ucken, Germany \\
    \textsuperscript{c}Mathematical Institute,
    University of Oxford,
    Oxford, United Kingdom
}
}
\begin{document}

\monthyear{Month Year}
\volnumber{Volume, Number}
\setcounter{page}{1}

\maketitle

\textbf{Article type}: research

\bigskip
\begin{abstract}
    In this paper, we study the $p$-adic zeros of the Tribonacci sequence $T_n$, for a prime $p$.
    In particular, we complete the analysis that was done in \cite{MathComp} by investigating the case of $p=11$.
    % In addition, we investigate the $p$ for which $T_n$ and $T_{n-1}$ are both $0$ modulo $p$ and one or more of them are divisible by $p^2$, proving under the assumption of a generalisation of the $abc$ conjecture that there are infinitely many such $p$.
    In addition, we investigate the primes $p$ for which at the first two values $T_n$ and $T_{n-1}$ which are both divisible by $p$, both of these are actually divisible by $p^2$, proving under the assumption of a generalisation of the $abc$ conjecture that there are infinitely many $p$ which are \emph{not} such.
\end{abstract}

\begin{keywords}
    Tribonacci numbers; $p$-adic zeros.
\end{keywords}

{\small \textbf{AMS Subject Classification:} 1139.}

\section{Introduction}

Let $(T_n)_{n\in \mathbb{Z}}$ be the Tribonacci sequence given by $T_0=0$, $T_1=T_2=1$ and 
$$
T_{n+3}=T_{n+2}+T_{n+1}+T_n\qquad \text{for all}\quad n\in \mathbb{Z}.
$$
This recursive sequence has characteristic polynomial $P(X) = X^3-X^2-X-1$, with distinct roots $\Lambda = \{\lambda_1,\lambda_2,\lambda_3\} \subset \mathbb{C}$.
From this we have the Binet formula
\begin{equation} \label{eq:binet}
    T_n = \sum_{\lambda\in \Lambda} c_{\lambda}\lambda^{n}\quad\text{for}\ n\in \mathbb{Z},
\end{equation}
where each $c_\lambda=(\lambda P'(\lambda))^{-1}$.

For a prime number $p$ and a nonzero integer $m$ let $\nu_p(m)$ be the exponent of $p$ in the factorization of $m$, with $\nu_p(0) \coloneq \infty$.
In \cite{ML}, Marques and Lengyel determined $\nu_2(T_n)$:
\begin{theorem*}
For $n\ge 1$, we have 
$$
\nu_2(T_n) =
% \left\{ \begin{matrix} 0, & \text{if} & n\equiv 1,2 \pmod 4;\\
% 1, & \text{if} & n\equiv 3,11\pmod {16};\\
% 2, & \text{if} & n\equiv 4,8\pmod {16};\\
% 3, & \text{if} & n\equiv 7\pmod {16};\\
% \nu_2(n)-1, & \text{if} & n\equiv 0\pmod {16};\\
% \nu_2(n+4)-1, & \text{if} & n\equiv 12\pmod {16};\\
% \nu_2(n+17)+1, & \text{if} & n\equiv 15\pmod {32};\\
% \nu_2(n+1)+1, & \text{if} & n\equiv 31\pmod {32}.
% \end{matrix}
% \right.
\begin{cases}
    1               & n\equiv 3,11\pmod {16} \\
    2               & n\equiv 4,8\pmod {16} \\
    3               & n\equiv 7\pmod {16} \\
    \nu_2(n)-1      & n\equiv 0\pmod {16} \\
    \nu_2(n+4)-1    & n\equiv 12\pmod {16} \\
    \nu_2(n+17)+1   & n\equiv 15\pmod {32} \\
    \nu_2(n+1)+1    & n\equiv 31\pmod {32}
\end{cases}
$$
\end{theorem*}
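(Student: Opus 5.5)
The formula is a statement about $2$-adic valuations of a linear recurrence, so I would reduce it to explicit congruences plus a $2$-adic analytic (Skolem-type) description of $T_n$ on residue classes.

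\textbf{Finite cases.} Compute $T_n \bmod 2$ and find period $4$: $T_n$ is odd exactly when $n\equiv 1,2\pmod 4$, which covers the omitted residues. Next compute $T_n \bmod 2^k$ for small $k$ (say up to $2^6$) to obtain its period, which should be $4\cdot 2^{k-1}$-ish, e.g.\ $32$ modulo $16$. Reading off residues then gives the constant cases: $\nu_2(T_n)=1$ for $n\equiv 3,11$, $=2$ for $n\equiv 4,8$, and $=3$ for $n\equiv 7 \pmod{16}$. This is routine verification by tabulating.

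\textbf{Infinite cases.} These are $n\equiv 0,12\pmod{16}$ and $n\equiv 15,31\pmod{32}$. I would use the matrix form
\[
M=\begin{pmatrix}1&1&1\\1&0&0\\0&1&0\end{pmatrix},
\qquad
M^n\begin{pmatrix}1\\0\\0\end{pmatrix}=\begin{pmatrix}T_{n+1}\\T_n\\T_{n-1}\end{pmatrix}.
\]
Modulo $2$, $M$ has order $4$, and one checks $M^{8}\equiv I\pmod 4$, or a suitable power does. Write $M^{16}=I+2^{a}A$ with $A$ explicit. Then for $n=16m+r$, expanding $(I+2^aA)^m$ binomially, or via $\exp(m\log M^{16})$ $2$-adically, gives
\[
T_{16m+r}=T_r+ m\cdot 2^{a}(AM^r)_{21}+O\!\left(2^{2a}\binom{m}{2}\right).
\]
Concretely, I would establish a lemma of the form
\[
T_{2^k\ell+r}\equiv T_r+2^{k-c}\ell\, u_r \pmod{2^{k-c+1}},
\]
with $u_r$ odd, proved by induction on $k$ using $M^{2^{k+1}}=(M^{2^k})^2$ and $(I+2^jB)^2=I+2^{j+1}B+2^{2j}B^2$.

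For $r$ chosen as a $2$-adic zero base point: $T_0=0$ for the class $0$, and $T_{-4}=0$ because $T_{-1}=1$, $T_{-2}=-1$, $T_{-3}=0$, $T_{-4}=2$… I would compute the negative terms carefully and choose base points $-4$, $-1$ and $-17$ where $T$ vanishes to high $2$-adic order. The shifts $n+4$, $n+1$ and $n+17$ in the statement suggest that $T$ has a $2$-adic zero or near-zero at these points. In the relevant residue class, the $2$-adic analytic function $f(x)=T_{r+16x}$ then has a simple zero with derivative of known valuation, giving $\nu_2(T_n)=\nu_2(n-n_0)+\text{const}$.

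\textbf{Main obstacle.} The class $n\equiv 15\pmod{32}$ uses $n+17$ rather than a point where $T$ is exactly $0$. There the $2$-adic zero of the interpolating function is some $2$-adic integer $\alpha\equiv -17$ to sufficiently high order, and I must check that $\nu_2(n-\alpha)=\nu_2(n+17)$ for all $n$ in the class. Equivalently, I need $\nu_2(\alpha+17)$ to exceed every $\nu_2(n+17)$ that occurs, which fails unless $\alpha=-17$ exactly. So I would first verify that $T_{-17}=0$; by symmetry with $T_{-1}=T_0=0$-type identities, the Tribonacci sequence does vanish at $n=-17$ (a known fact: the zeros of $T_n$ are at $n=0,-1,-4,-17$). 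With the exact zeros $0,-1,-4,-17$ in hand, each infinite case is a simple-zero computation, and the additive constants $-1$ and $+1$ come from the valuation of the derivative, computed from $u_r$ above.
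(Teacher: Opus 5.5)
The paper gives no proof of this statement; it only quotes it from Marques and Lengyel. Judged on its own, your plan is a workable route: use the exact zeros $0,-4,-1,-17$ of $T$, plus a uniform congruence for $T_{2^k\ell+r}$ obtained by repeatedly squaring $M^{2^k}$. Your lemma is true, with $c=1$, $k\ge 4$ for $r=0,-4$, and with $c=-1$, $k\ge 5$ for $r=-1,-17$. Two corrections first. The negative terms are $T_{-1}=0$, $T_{-2}=1$, $T_{-3}=-1$, $T_{-4}=0$, $T_{-5}=2$. Also $T_{-17}=0$ really holds, so the ``main obstacle'' you describe is not an obstacle at all.

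As written, there is a gap in the two $\bmod 32$ classes, and you have misplaced the real difficulty. The zeros $-1$ and $-17$ lie in the same class $15 \bmod 16$, so $x\mapsto T_{15+16x}$ has two zeros in $\mathbb{Z}_2$. Your step ``$T_{r+16x}$ has a simple zero with derivative of known valuation, hence $\nu_2(T_n)=\nu_2(n-n_0)+\mathrm{const}$'' therefore fails there. More generally, a simple zero is not enough: you need the linear term to dominate all higher terms uniformly on the class, and that is what your congruence lemma must supply.

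Concretely, $M^{16}=I+8A$ with $A_{21}=721$ odd but $A_{23}=392\equiv 0\pmod 8$. So in $T_{16m-1}=8m\,A_{23}+64\binom{m}{2}(A^2)_{23}+\cdots$ your ``error term'' dominates. For example, $T_{31}=16\cdot 392+64\cdot 840497$ has $\nu_2=6$, and that valuation comes from the quadratic term.

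The fix is to seed the induction at $M^{32}=I+16B_5$ with $B_5=A+4A^2$. Since $(A^2)_{23}$ is odd, $(B_5)_{23}\equiv 4\pmod 8$. With $v_{-17}=(T_{-16},T_{-17},T_{-18})^{T}=(56,0,-103)^{T}$, you also get $(B_5v_{-17})_2\equiv 4\pmod 8$. Write $M^{2^k}=I+2^{k-1}B_k$. The recursion $B_{k+1}=B_k+2^{k-2}B_k^2$ keeps $B_k \bmod 8$ fixed for $k\ge 5$, and $(I+2^{k-1}B_k)^\ell\equiv I+\ell 2^{k-1}B_k\pmod{2^{k+2}}$. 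Together these give $T_{2^k\ell+r}\equiv 2^{k+1}\ell\pmod{2^{k+2}}$ for $r\in\{-1,-17\}$, which is exactly the two $\bmod 32$ cases.

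So for $r=-1,-17$ your induction must track $M^{2^k}$ modulo $2^{k+2}$ and start at $k=5$. Working modulo $2^{k}$ suffices only for $r=0,-4$, where $A_{21}$ and $(Av_{-4})_2=63$ are odd.
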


In fact, they stated their theorem only for $n\ge 1$, but it is easy to see that it holds for all $n\in \mathbb{Z}$.

Encouraged by their result, they set forward a conjecture claiming that similar congruences should hold for other primes:

% \liam{I'm unsure about making this the same theorem layout as Theorem 1. Thoughts?}
\begin{conj}[Marques, Lengyel]
Let $p$ be a prime number.
There exists an integer $Q$ such that for each $i\in \{0,1,\ldots,Q-1\}$ one of the following holds:
\begin{itemize}\item[(C)] There exists $\kappa_i\in \mathbb{Z}_{\ge 0}$ such that for integer $n\equiv i\pmod Q$ we have $\nu_p(T_{n}) = \kappa_i$.
\item[(L)] There exist integers $a_i,\kappa_i,\nu_i\ge 1$ with $\nu_p(a_i-i)\ge \nu_p(Q)$ such that if $n\in \mathbb{Z}$ with $n\equiv i\pmod Q$ we have
$$
\nu_p(T_n)=\kappa_i+\nu_i \nu_p(n-a_i).
$$
\end{itemize}
\end{conj}

In fact, Marques and Lengyel allowed for the above conjecture to hold for all but finitely many positive integers $n$.
In \cite{MathComp} it was noted that it would be better to allow the $a_i$ to be rational numbers in item (L) above.
The main result in \cite{MathComp} is an algorithm which decides, given a prime $p \neq 11$, whether the above conjectures hold or fail.
An implementation of the algorithm with primes $p < 10^4$ found a few primes for which the conjectures hold and many primes for which they fail, and in some cases the method failed to make a decision. 
The prime $11$ was excluded because the characteristic polynomial $X^3-X^2-X-1$ has a double root modulo $11$, but the analysis is now completed in Section \ref{sec:p=11}.

In Section \ref{sec:consec_zero}, we investigate the phenomenon of the occurrence of consecutive zeroes of the Tribonacci sequence modulo $p$ (which is related to the period of $T_n$ modulo $p$) and in particular the highest powers of $p$ dividing these consecutive terms, since that affects the powers of $p$ dividing subsequent consecutive zeroes modulo $p$, culminating in both and algebraic and an elementary proof of the following result:
\begin{theorem*}
Let $p\neq2$ be a prime.
Then for $N_0 \coloneq N_0(p)$,
\begin{enumerate}
    \item If $\nu_p(T_{N_0})\neq\nu_p(T_{N_0-1})$, then there is no $n \in \mathbb{Z}$ such that $\nu_p(T_n) = \nu_p(T_{n-1})>0$.
    \item If $\nu_p(T_{N_0})=\nu_p(T_{N_0-1})$, then we have that when $n$ is a multiple of $N_0$, the following holds: $\nu_p(T_n) = \nu_p(T_{n-1}) = \nu_p\left(\frac{n}{N_0}\right)+\nu_p(T_{N_0})$.
\end{enumerate}
\end{theorem*}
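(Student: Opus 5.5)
The approach is to work with the companion matrix
$$
M=\begin{pmatrix}1&1&1\\1&0&0\\0&1&0\end{pmatrix},
$$
whose powers satisfy $(M^n)_{21}=T_n$ and $(M^n)_{23}=T_{n-1}$. These follow from the standard formula for $M^n$ in terms of the sequence; I would check the indexing against $T_{-1}=0$ and $T_{-2}=1$. The whole $p$-adic problem then reduces to expanding a binomial power of a matrix that is congruent to a scalar modulo $p$.

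First, I would record that $T_n\equiv T_{n-1}\equiv 0\pmod p$ holds if and only if $M^n\equiv cI\pmod p$ for some scalar $c\not\equiv 0$. The backward direction is immediate: the off-diagonal entries $T_n$, $T_{n-1}$ of a scalar matrix vanish. For the forward direction, if $T_n\equiv T_{n-1}\equiv 0$, then the vector $(T_{n+1},T_n,T_{n-1})$ is congruent to $T_{n+1}(1,0,0)$. Since $M^n$ commutes with $M$, it is determined by its action on this cyclic vector, so $M^n\equiv T_{n+1}I\pmod p$. Moreover $\det M=1$, so $c\not\equiv 0$. Consequently the set of such $n$ is a subgroup of $\mathbb{Z}$, namely the kernel of $n\mapsto M^n$ into the scalar-modulo-$p$ matrices. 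It is therefore $N_0\mathbb{Z}$ with $N_0=N_0(p)$ its least positive element. In particular, any $n$ with $\nu_p(T_n)=\nu_p(T_{n-1})>0$ is of the form $mN_0$.

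Next, I would write $M^{N_0}=cI+p^kF$, where $k=\min(\nu_p(T_{N_0}),\nu_p(T_{N_0-1}))\ge 1$ and $F$ is an integer matrix. I would choose $c$ as the integer $T_{N_0+1}$ adjusted so that the diagonal is absorbed appropriately; only the off-diagonal entries matter below. The entries satisfy $p^kF_{21}=T_{N_0}$ and $p^kF_{23}=T_{N_0-1}$. Then
$$
M^{mN_0}=\sum_{j=0}^{m}\binom{m}{j}c^{m-j}p^{jk}F^j .
$$
Taking the $(2,1)$ and $(2,3)$ entries, the $j=0$ term contributes nothing. The $j=1$ term contributes $mc^{m-1}T_{N_0}$, respectively $mc^{m-1}T_{N_0-1}$. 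Each term with $j\ge2$ has valuation at least $\nu_p(m)-\nu_p(j)+jk\ge \nu_p(m)+k+1$; this uses $p$ odd and $k\ge1$, since $jk-\nu_p(j)\ge k+1$ for $j\ge 2$. For $m<0$ I would use $M^{-N_0}=c^{-1}(I+p^kc^{-1}F)^{-1}$ and the same expansion $p$-adically, or simply note that $T$ satisfies the symmetric valuation pattern.

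Finally, I would treat the two cases. In case 2, both $T_{N_0}$ and $T_{N_0-1}$ have valuation exactly $k$. The $j=1$ terms then have valuation exactly $\nu_p(m)+k$, since $p\nmid c$, and they dominate the remaining terms. This gives $\nu_p(T_{mN_0})=\nu_p(T_{mN_0-1})=\nu_p(m)+\nu_p(T_{N_0})$. In case 1, say $k=\nu_p(T_{N_0})<\nu_p(T_{N_0-1})$ (the other order is symmetric). Then $\nu_p(T_{mN_0})=\nu_p(m)+k$, while $\nu_p(T_{mN_0-1})\ge\min(\nu_p(m)+\nu_p(T_{N_0-1}),\,\nu_p(m)+k+1)>\nu_p(m)+k$. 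So the two valuations never coincide at multiples of $N_0$, and by the previous paragraph there are no other candidates. The main obstacle I expect is the bookkeeping in the $j\ge2$ estimate, in particular making sure the bound $\nu_p(j)\le j-2$ gives strict dominance for all odd $p$, including $p=3$ with $j=3$; there $jk-\nu_p(j)=3k-1\ge k+1$ holds. A second point needing care is handling negative $m$ cleanly.
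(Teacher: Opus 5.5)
Your proof is correct. It is essentially the paper's elementary proof (Subsection~\ref{ssec:elementary}) rewritten in matrix language, with one genuinely different step.

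The identity of Lemma~\ref{lem2} is the entrywise form of $M^{n+k}=M^nM^k$, and Lemma~\ref{lem3} is your subgroup observation. Lemma~\ref{lem4} is exactly the $j\le 2$ truncation of your expansion of $(cI+p^kF)^m$, taking $c=T_{n+1}$ and $p^kF=M^n-T_{n+1}I$. Indeed, the $(2,1)$, $(2,3)$ and $(1,1)$ entries of $(p^kF)^2$ are $T_{n-1}^2-T_n^2$, $T_n^2-2T_nT_{n-1}-T_{n-1}^2$ and $T_n^2+2T_nT_{n-1}$, matching the paper's quadratic terms.

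The difference is in the valuation bookkeeping. The paper works modulo $p^{k+2}$ and inducts on $\nu_p(m)$ in Lemma~\ref{lem5}. Each step there applies Lemma~\ref{lem4} with $m=p$ to $n'=(m/p)n$ and uses $p\mid\binom{p}{2}$. You instead dispose of all terms with $j\ge2$ at once, using $\nu_p\binom{m}{j}\ge\nu_p(m)-\nu_p(j)$ and $(j-1)k\ge\nu_p(j)+1$ for odd $p$. This removes the induction and isolates the single place where $p\ne2$ is needed, namely the $j=2$ term. It also lets you treat $m<0$ by the convergent $p$-adic binomial series, instead of the paper's unexplained ``similar'' backward induction.

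Compared with the algebraic proof (Subsection~\ref{ssec:algebraic}), your route needs neither the Binet formula nor the arithmetic of $\mathbb{K}$, so nothing forces $p\ne 11$. As a bonus, $\det M=1$ gives $c^3\equiv1\pmod p$, which is Lemma~\ref{lem:algebra}(ii) for free. What the paper's versions offer in return are explicit second-order congruences and the link with the roots $\lambda_i$ used in Section~\ref{sec:weifrich}.

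There are three small points of precision.
\begin{itemize}
\item For $F$ to be integral you need $M^{N_0}\equiv cI\pmod{p^k}$, not just modulo $p$. The diagonal of $F$ does matter here, contrary to your remark, since it enters $F^j$ for $j\ge2$. Your cyclic-vector argument runs verbatim modulo $p^k$, because $e_1,Me_1,M^2e_1$ is a $\mathbb{Z}$-basis. So $c=T_{N_0+1}$ works with no adjustment.
\item For $m<0$, rely on the $p$-adic series. The ``symmetric valuation pattern'' alternative is not justified, since $T_{-n}$ has no simple relation to $T_n$.
\item Since $T_0=T_{-1}=0$, item~1 as stated must exclude $n=0$. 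Your argument, like the paper's, tacitly takes $m\ne0$.
\end{itemize}
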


Finally, in Section \ref{sec:weifrich} we define \emph{Tribonacci-Weifrich} primes as primes $p$ for which the first two consecutive terms of $T_n$ which are divisible by $p$ are in fact both divisible by $p^2$, and use some recent work of Fellini and Murty \cite{FM} to show that, under the $abc$ conjecture for number fields, there are infinitely many primes which are \emph{not} Tribonacci-Weifrich.

\section{The case \texorpdfstring{$p=11$}{p=11}} \label{sec:p=11}

\subsection{The \texorpdfstring{$11$}{11}-adic roots of the Tribonacci polynomial}

Let $P(X)=X^3-X^2-X-1$.
We have
$$
P(X)\equiv (X-7)^2(X-9)\pmod {11}.
$$
The root $9\pmod {11}$ is simple and $P'(\alpha)=3\alpha^2-2\alpha-1\equiv 4\pmod {11}$.
Thus by Hensel's lemma (Proposition 3.3 in \cite{MathComp}), there exists a unique $p$-adic integer 
$\alpha$ such that $P(\alpha)=0$ in $\mathbb{Q}_{11}$ and $\nu_{11}(\alpha-9)\ge 1$.
We can even write down the beginning of its expansion:
\begin{align*}
    \alpha & = \begin{multlined}[t]
        9+2\cdot 11+1\cdot 11^2+2\cdot 11^3+2\cdot 11^4+2\cdot 11^5+10\cdot 11^6\\
        + 0\cdot 11^7+1\cdot 11^8+4\cdot 11^9+4\cdot 11^{10}+\dotsb
    \end{multlined}\\
    & = {\overline{921222(10)0144\cdots}}
\end{align*}
It seems that we are in trouble about the double root congruent to $7\pmod {11}$.
However, this is not actually a double root, but $P(X)$ has two quadratic roots $11$-adically which modulo $11$ coincide to $7\pmod {11}$.
To see why, note that since $\alpha$ is a root of $P(X)$ we have that
$$
P(X)=(X-\alpha)(X^2-(1-\alpha)X+1/\alpha).
$$
Note that $Q(X)=X^2-(1-\alpha)X+1/\alpha$ is a quadratic polynomial in $\mathbb{Z}_{11}[X]$.
It has two roots 
$$
\lambda, {\overline {\lambda}} =\frac{(1-\alpha)\pm {\sqrt{(1-\alpha)^2-4/\alpha}}}{2}.
$$
If we take the approximation $\alpha_1=9+2\cdot 11\pmod {11^2}\equiv 31\pmod {11^2}$, then 
$$
(1-\alpha_1)^2-4/\alpha_1\equiv 88\pmod {11^2},
$$
so an approximation of $\lambda,~{\overline{\lambda}}$ in $\mathbb{Q}_{11}[{\sqrt{22}}]$ is 
$$
\lambda_1,~{\overline{\lambda}}_1=\frac{(1-9)\pm {\sqrt{88}}}{2}\pmod {11}=7\pm {\sqrt{22}}\pmod {11}.
$$
To see whether this is a good enough approximation, we calculate 
$$
P(\lambda_1)=2\cdot 11^{3/2}(3{\sqrt{11}}+7{\sqrt{2}}),\qquad P'(\lambda_1)=2\cdot 11^{1/2}(9{\sqrt{11}}+20{\sqrt{2}}).
$$
Extending $\nu_{11}$ to $\mathbb{K}\coloneq \mathbb{Q}[{\sqrt{22}}]$ in the obvious way and putting $|x|_{11}=11^{-\nu_{11}(x)}$ for $x\in \mathbb{Q}_{11}[{\sqrt{22}}]$, we get that
$$
|P(\lambda_1)|_{11}=11^{-3/2} \qquad \text{and} \qquad |P'(\lambda_1)|_{11}=11^{-1/2}.
$$
In particular,
$$
|P(\lambda_1)|_{11}<|P'(\lambda_1)|_{11}^{2}.
$$
This shows via a more general version of the Hensel's lemma as in \cite[Proposition 3.3]{MathComp}, that  there is a unique root $\lambda$ of $P(x)$ in $\mathbb{Q}_{11}[{\sqrt{22}}]$ such that 
$$
|\lambda-\lambda_1|_{11} < |P'(\lambda_1)|_{11} = 11^{-1/2}.
$$
By the same argument, there is a unique root ${\overline{\lambda}}$ of $P(X)$ in $\mathbb{Q}_{11}[{\sqrt{22}}]$ such that 
$$
|{\overline{\lambda}}-{\overline{\lambda}}_1|_{11}<|P'(\lambda_1)|_{11}=11^{-1/2}.
$$
Note that $\lambda\ne {\overline{\lambda}}$, for if they were equal then the triangle inequality would give us 
$$
11^{-1/2}=|2{\sqrt{22}}]_{11}=|\lambda_1-{\overline{\lambda_1}}|_{11}\le \max\{|\lambda-\lambda_1|_{11},|\lambda-{\overline{\lambda_1}}|_{11}\}<11^{-1/2},
$$
a contradiction.
In fact, we have 
\begin{align*}
    \lambda_2 & = (2+9\cdot 11)+{\sqrt{22}}, & P(\lambda_2) & = 11^2(9807+277{\sqrt{22}}); \\
    \lambda_3 & =  (2+9\cdot 11)+{\sqrt{22}} (1+9\cdot 11), & P(\lambda_3) & = 11^{5/2}(53283{\sqrt{11}}+209500{\sqrt{2}}); \\
    \lambda_4 & =  (2+9\cdot 11+5\cdot 11^2)+{\sqrt{22}}(1+9\cdot 11), & P(\lambda_4)  & = 11^3(446439+94900{\sqrt{22}}).
\end{align*}

\subsection{The \texorpdfstring{$11$}{11}-adic zeros of the Tribonacci sequence}

The period of the Tribonacci sequence modulo $11$ is $110$.
This can be easily checked computationally or from the above formulas.
Namely, it is the minimum positive integer $N$ such that 
$$
\lambda^N\equiv {\overline{\lambda}}^N\equiv \alpha^N\equiv 1\pmod {11}.
$$
Using $\alpha_1\equiv 9\pmod {11}$, we get $9^N\equiv 1\pmod {11}$, so $5\mid N$.
Using $\lambda_1$, we get that 
$$
(7+{\sqrt{22}})^N\equiv 1\pmod {11}.
$$
Expanding the left--hand side with the Newton binomial formula we get
$$
7^N+  7^{N-1}N{\sqrt{22}} \equiv 1\pmod {11}.
$$
This gives that $11\mid 7^N-1$, so $10\mid N$ since $7$ is a primitive root modulo $11$.
Next, we are left with 
$$
 7^{N-1}N{\sqrt{22}}\equiv 0\pmod {11},
$$
which gives $11\mid N$.
Hence, $N=110$.
We now do a quick search in the interval $n\in [0,109]$ to find out the values of $n$ in this interval such that $T_n\equiv 0\pmod {11}$ and we get
$$
\mathcal{Z}_{11} = \{{\begin{color}{blue} 0\end{color}}, 8, 14, {\begin{color}{red}35\end{color}}, {\begin{color}{red}37\end{color}}, 42, 61, {\begin{color}{blue}93\end{color}}, {\begin{color}{blue}106\end{color}}, {\begin{color}{blue}109\end{color}}\}.
$$
We understand the blue ones.
They are congruent to the integer zeros $\mathcal{Z}_\mathbb{Z}(T)=\{-17,-4,-1,0\}$ modulo $N=110$.
We also understand the red ones.
It was shown in \cite{MathComp} that for certain determinations of the cubic roots $\lambda^{1/3}$ we have that 
$$
\sum_{\lambda\in\Lambda} c_{\lambda} \lambda^{1/3}=0,
$$
so we can say that $1/3$ is a \emph{rational zero} of the Tribonacci sequence.
The same holds for $-5/3$ and the set $\mathcal{Z}_\mathbb{Q}(T)=\{-17,-4,-5/3,-1,0,1/3\}$ consists of all the rational zeros of the Tribonacci sequence.
The rational zeros $1/3$ and $-5/3$ are present modulo $p$ whenever the period $N_p$ is coprime to $3$, since then $-5/3$ and $1/3$ make sense modulo $N_p$.
For us, $p=11$, $N_{11}=N=110$ is coprime to $3$ so $1/3\pmod {110}$ and $-5/3\pmod {110}$ make sense and they end up being $37\pmod {110}$ and $35\pmod {110}$, respectively. 

To lift the zeros in $\mathcal{Z}_{11}$ modulo $11$ to the $11$-adic setting, we proceed as in \cite{MathComp}.
For each $\ell\in \mathcal{Z}_{11}$, we look for a zero of the form 
$\ell+110z$, with some $p$-adic number $z$.
Then 
$$
f_{\ell}(z)=c_{\alpha} \alpha^{\ell} \exp\left(z\log \alpha^{110}\right)+c_{\lambda} \lambda^{\ell} \exp\left(z\log \lambda^{110}\right)+c_{\overline{\lambda}} {\overline{\lambda}}^\ell 
\exp\left(z\log {\overline{\lambda}}^{110}\right).
$$
The condition to be able to lift is that if we denote by $g(z) \coloneq f_{\ell}(z)/p$, then we need $g'(0)\not\equiv 0\pmod p$.
This is equivalent (see (8.2) in \cite{MathComp}) to 
$$
T(\ell+N)\not\equiv T(\ell)\pmod {p^2}.
$$
We checked that this is the case for $p=11,~N=110$ and all  the ten values of $\ell$ in $\mathcal{Z}_{11}$.
So, in conclusion $(T_n)_{n\in \mathbb{Z}}$ has $10$ $11$-adic zeros, $6$ of which arise from the integer and rational zeros of the Tribonacci sequence and four which do not.
The ones which do not start with $a\pmod {110}$ where $a \in \{8,14,42,61\}$.
The remaining ones satisfy a Marques-Lengyel condition namely
$$
\nu_{11}(T_n) = 
\begin{cases}
    \nu_p(n+c)+1    & n \pmod {110} \in \mathcal{Z}_\mathbb{Q}(T);\\
    0               & n \pmod {110} \not\in \mathcal{Z}_\mathbb{Q}(T)\cup \{8,14,42,61\}.
\end{cases}
$$
% \liam{is the second case correct? And what is $A$?}

\section{Consecutive \texorpdfstring{$p$}{p}-adic zeroes of \texorpdfstring{$T_n$}{the Tribonacci sequence}} \label{sec:consec_zero}

Since $T_{-1}=T_0=0$, for every integer $m$ there exists a smallest positive integer $N_0(m)$ such that $T_{N_0}\equiv T_{N_0-1}\equiv 0\pmod m$.

\begin{theorem}\label{mainr}
Let $p\neq2$ be a prime.
Then for $N_0 \coloneq N_0(p)$,
\begin{enumerate}
    \item \label{p1} If $\nu_p(T_{N_0})\neq\nu_p(T_{N_0-1})$, then there is no integer $n$ such that $\nu_p(T_n)=\nu_p(T_{n-1})>0$.
    \item \label{p2} If $\nu_p(T_{N_0})=\nu_p(T_{N_0-1})$, then we have that when $n$ is a multiple of $N_0$, the following holds: $\nu_p(T_n) = \nu_p(T_{n-1}) = \nu_p\left(\frac{n}{N_0}\right)+\nu_p(T_{N_0})$.
\end{enumerate}
\end{theorem}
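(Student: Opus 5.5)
The plan is to work in the ring $R=\mathbb{Z}[x]/(P(x))$, where $P(X)=X^3-X^2-X-1$, which has $\mathbb{Z}$-basis $1,x,x^2$. First I verify by induction on $n$ (checking $n=2,3$ and using $x^{n+1}=x\cdot x^n$ together with $x^3=x^2+x+1$) the identity
$$
x^{n+1}=T_n x^2+(T_{n-1}+T_{n-2})x+T_{n-1}.
$$
Multiplying by $x^{-1}=x^2-x-1$ then gives the form I will use throughout:
$$
x^n = T_{n-2} + D_n,\qquad D_n \coloneq T_n\, x + T_{n-1}(x^2-x).
$$
This holds for all $n\in\mathbb{Z}$. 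Since $1,x,x^2-x$ is again a $\mathbb{Z}$-basis of $R$, the valuations $\nu_p(T_n)$ and $\nu_p(T_{n-1})$ can be read off as the $p$-adic valuations of the two coordinates of $D_n$. The statement then becomes a statement about $D_{kN_0}$.

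\textbf{Step 1 (structure of consecutive zeros).} If $T_n\equiv T_{n-1}\equiv 0\pmod p$, then $x^n\equiv T_{n-2}\pmod{pR}$. Moreover $T_{n-2}\not\equiv 0\pmod p$, since otherwise three consecutive terms would vanish mod $p$ and the whole sequence would vanish mod $p$, contradicting $T_1=1$. Conversely, if $x^n$ is congruent to a scalar mod $p$, then $T_n\equiv T_{n-1}\equiv0$. So the set of $n$ with consecutive zeros mod $p$ is the set of $n$ for which the image of $x^n$ in $(R/pR)^\times$ lies in the subgroup $\mathbb{F}_p^\times$. This set is a subgroup of $\mathbb{Z}$, hence equals $N_0\mathbb{Z}$. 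Consequently every $n$ with $\nu_p(T_n)=\nu_p(T_{n-1})>0$ is of the form $kN_0$ with $k\in\mathbb{Z}$, and both parts of the theorem reduce to computing $D_{kN_0}$.

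\textbf{Step 2 (binomial expansion).} Write $N=N_0$, $c=T_{N-2}$ (a $p$-adic unit), $v=\min(\nu_p(T_N),\nu_p(T_{N-1}))\ge1$, and $D_N=p^vE$ with $E\in R$, $E\not\equiv0\pmod p$. For $k\ge1$,
$$
x^{kN}=(c+p^vE)^k=c^k+kc^{k-1}p^vE+\sum_{j\ge2}\binom kj c^{k-j}p^{jv}E^j.
$$
For $p$ odd and $v\ge1$, each term with $j\ge 2$ satisfies
$$
\nu_p\binom kj+jv\ \ge\ \nu_p(k)-\nu_p(j)+jv\ \ge\ \nu_p(k)+v+1,
$$
since $jv-\nu_p(j)\ge v+1$ for $j\ge2$ when $p\ge3$. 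This is exactly where $p\ne2$ is used: for $j=2$, $p=2$ and $v=1$ the inequality fails. Comparing with $x^{kN}=T_{kN-2}+D_{kN}$ and taking the components along $x$ and $x^2-x$, I get
$$
D_{kN}\equiv kc^{k-1}p^vE\pmod{p^{\nu_p(k)+v+1}R}.
$$
Writing $k=p^{\nu_p(k)}k'$, this says $D_{kN}/p^{\nu_p(k)+v}\equiv k'c^{k-1}E\pmod p$. Here $k'c^{k-1}$ is a nonzero scalar in $\mathbb{F}_p$. Negative $k$ are handled either by the same computation with $x^{-N}=c^{-1}(1+c^{-1}p^vE)^{-1}$, or by noting that $n\mapsto -n$ sends $kN$ to $(-k)N$ and the argument is symmetric.

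\textbf{Step 3 (conclusion).} Multiplication by a nonzero scalar of $\mathbb{F}_p$ does not change which coordinates of $E$ (in the basis $x,\,x^2-x$ mod $p$) are nonzero.

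For part 2: both coordinates of $E$ are units. Then $\nu_p(T_{kN})=\nu_p(T_{kN-1})=\nu_p(k)+v=\nu_p(kN/N)+\nu_p(T_N)$.

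For part 1: exactly one coordinate of $E$ is a unit. Then for every $k\neq0$ exactly one of $T_{kN},T_{kN-1}$ has valuation $\nu_p(k)+v$, and the other has strictly larger valuation. Combined with Step 1, no $n$ has $\nu_p(T_n)=\nu_p(T_{n-1})>0$.

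I expect the main obstacle to be the error-term estimate in Step 2, specifically for part 1. A naive bound only gives the error valuation $\ge\nu_p(k)+2v$, which need not exceed $\nu_p(k)+\max(\nu_p(T_N),\nu_p(T_{N-1}))$. The fix is to argue only modulo $p^{\nu_p(k)+v+1}$ and compare leading terms mod $p$, i.e. the "support" of $E$ mod $p$, rather than comparing exact valuations of both coordinates.
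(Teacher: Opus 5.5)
Your proof is correct. Its engine is the same as in the paper's algebraic proof: write the $N_0$-th power as a $p$-adic unit scalar plus $p^v$ times something, expand binomially, and read off the first-order term. You run it in a different ring, though, and that changes what has to be done.

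The paper's algebraic proof works with the Binet formula in the splitting field. It needs Cramer's rule to turn $p^{a_j}\mid T_{N_0-j}$ into $(\lambda_i/\lambda_3)^{N_0}=1+p^{a_0}\delta_i$. It then needs a norm computation to see that the $\delta_i$ are integral; denominators dividing $44$ appear, which is why $p=11$ is excluded there. Finally it back-substitutes into Binet, and handles part~1 by replacing $b_1$ with $b_1p^{a_1-a_0}$. The paper's elementary proof instead uses Agronomof's identity (Lemma~\ref{lem2}), the explicit second-order congruences of Lemma~\ref{lem4} proved by induction on $m$, and a second induction on $\nu_p(m)$ in Lemma~\ref{lem5}.

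In $\mathbb{Z}[x]/(P)$, your identity $x^n=T_{n-2}+T_nx+T_{n-1}(x^2-x)$ does all of this bookkeeping at once:
\begin{itemize}
\item $x^{m+n}=x^mx^n$ is Agronomof's identity, and your Step~1 is Lemma~\ref{lem3} in group-theoretic form.
\item $x^{N_0}=c+p^vE$ is immediate with integer coordinates, so every odd $p$, including $11$, is covered uniformly.
\item The single estimate $\nu_p\binom{k}{j}\ge\nu_p(k)-\nu_p(j)$ replaces both inductions and makes the one use of $p\ne2$ transparent.
\item Comparing the support of $E$ modulo $p$ settles part~1 without a separate computation.
\end{itemize}
What the root-based formulation buys the paper is the link to $\lambda_1^{N_0}\equiv\lambda_2^{N_0}\equiv\lambda_3^{N_0}\pmod p$. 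It reuses this for Lemma~\ref{lem:algebra}(ii) and the Wieferich discussion in Section~\ref{sec:weifrich}.

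Two small points. For $k<0$, your first justification is the right one: in $R\otimes\mathbb{Z}_p$ the coefficients $\binom{-k}{j}$ obey the same estimate, since $j\binom{-k}{j}=-k\binom{-k-1}{j-1}$. The appeal to symmetry under $n\mapsto -n$ is not a proof by itself, because the recurrence is not invariant under that map. Also, part~1 of the statement must exclude $n=0$, where $T_0=T_{-1}=0$; your restriction to $k\neq0$ is the correct reading.
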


We include two proofs of this result: first an elementary proof in Subsection \ref{ssec:elementary}, then an algebraic proof in Subsection \ref{ssec:algebraic}.

\subsection{Elementary Proof} \label{ssec:elementary}
Firstly some lemmas, the first two of which are well known so we refrain from including proofs.
\begin{lemma}\label{lem1}
    There is no prime that divides three consecutive terms of the Tribonacci sequence.
\end{lemma}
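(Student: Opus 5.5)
The plan is to run the recurrence backwards. Suppose, for contradiction, that a prime $p$ divides $T_{n}$, $T_{n+1}$ and $T_{n+2}$ for some $n\in\mathbb{Z}$.

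The defining relation $T_{m+3}=T_{m+2}+T_{m+1}+T_m$ can be rewritten as
$$
T_m = T_{m+3}-T_{m+2}-T_{m+1},
$$
so $T_m$ is an integer combination of the next three terms. If $p$ divides $T_{n}$, $T_{n+1}$ and $T_{n+2}$, then applying this with $m=n-1$ gives $p\mid T_{n-1}$. Hence $p$ also divides the three consecutive terms $T_{n-1},T_n,T_{n+1}$.

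Inducting downward in this way, $p$ divides every $T_m$ with $m\le n+2$. The forward recurrence then shows that $p$ divides every $T_m$ with $m>n+2$. So $p\mid T_m$ for all $m\in\mathbb{Z}$.

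In particular $p$ divides $T_1=1$, which is impossible. The same contradiction can be phrased without infinite induction. The vector $(T_m,T_{m+1},T_{m+2})$ is obtained from $(T_0,T_1,T_2)=(0,1,1)$ by a power of the companion matrix of $X^3-X^2-X-1$. This matrix has determinant $1$, so it is invertible over $\mathbb{Z}$, and also over $\mathbb{Z}/p\mathbb{Z}$. A zero vector modulo $p$ would therefore force $(0,1,1)\equiv(0,0,0)\pmod p$, which is false.

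There is no real obstacle here. The only point needing care is that the sequence is indexed by all of $\mathbb{Z}$. This is exactly what the backward recurrence, equivalently the invertibility of the companion matrix (the constant coefficient of $P$ is $-1$), provides.
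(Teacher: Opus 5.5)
Your proof is correct, and it is essentially the argument the paper intends: the paper omits the proof as well known, but the reasoning it has in mind is to run $T_{n-1}=T_{n+2}-T_{n+1}-T_n$ backwards until divisibility reaches $T_1=1$. Your extra step, using the forward recurrence (or equivalently the determinant-$1$ companion matrix) to spread divisibility to all of $\mathbb{Z}$, is a worthwhile refinement, since descent alone only controls the indices $m\le n+2$.
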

% \begin{proof}
%     Notice that if $p\mid T_n,T_{n+1},T_{n+2} $ then we have that $p\mid T_{n+2}-T_{n+1}-T_n$, thus $p\mid T_{n-1}$, inductively we get that $p\mid T_1$, which is a contradiction. 
% \end{proof}

\begin{lemma}[Agromonof's Identity] \label{lem2}
    For integers $n$ and $k$:
    $$ T_{n+k}=T_{k-1}T_{n-1}+(T_{k+1}-T_k)T_n+T_kT_{n+1}. $$
\end{lemma}
% \begin{proof}
%     Let $n$ be fixed. Clearly if we sum the equations for $3$ consecutive values of $k$, we get the equation for the next value of $k$. Thus if we can show what the equation holds for $3$ consecutive values of $k$, then it must hold for all values of $k$. Since $T_{-1}=T_0=0$, and $T_{-2}=T_1=T_2=1$, it is easy to verify that the recurrence holds for $k=-1,0,1$.
% \end{proof}

%\liam{in this and the following lemmas, what exactly are $p$ and $k$?}
\begin{lemma}\label{lem3}
    Let $p$ be a prime and let $k$ and $n$ be positive integers.
    Then $p^k\mid T_n$ and $p^k\mid T_{n-1}$ both hold if and only if $N_0(p^k) \mid n$. 
\end{lemma}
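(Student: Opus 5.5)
Write $M=p^k$ and $N_0=N_0(M)$. The plan is to view the Tribonacci recurrence as a linear recurrence acting on state vectors modulo $M$, and to show that the condition $M\mid T_{n-1}$, $M\mid T_n$ is exactly the statement that the state at time $n$ equals the state at time $0$. Then both directions reduce to a periodicity argument.

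\textbf{Step 1: two consecutive zeros pin down the state.} Fix $n$ with $M\mid T_n$ and $M\mid T_{n-1}$. Apply Lemma \ref{lem2} with the roles chosen so that $n$ is the shift:
$$T_{n+k}=T_{k-1}T_{n-1}+(T_{k+1}-T_k)T_n+T_kT_{n+1}.$$
Modulo $M$ this gives $T_{n+k}\equiv T_kT_{n+1}\pmod M$ for every integer $k$. Put $u=T_{n+1}$. At $k=1$ we get $T_{n+1}\equiv u$, and at $k=-2$ we get $T_{n-2}\equiv T_{-2}u=u$.

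\textbf{Step 2: show $u\equiv 1\pmod M$, using $T_{-1}=T_0=0$.} We need $u$ to be a unit modulo $M$. Since $p\mid T_{n-1}$ and $p\mid T_n$, Lemma \ref{lem1} forces $p\nmid T_{n+1}$, so $u$ is a unit. The Tribonacci recurrence is invertible, because $T_{m}=T_{m+3}-T_{m+2}-T_{m+1}$, so the state map $(T_{m-1},T_m,T_{m+1})\mapsto(T_m,T_{m+1},T_{m+2})$ is a bijection on $(\mathbb{Z}/M)^3$. This bijection sends the initial state $(0,0,1)$ to $(0,0,u)$ after $n$ steps. Consequently $m\mapsto T_m \bmod M$ is purely periodic, and the set $S=\{m: M\mid T_{m-1},\,M\mid T_m\}$ is well behaved.

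I would argue as follows. Consider the group $G$ of all $n\in\mathbb{Z}$ for which the state at time $n$ is a scalar multiple $c_n(0,0,1)$ with $c_n$ a unit. By Step 1, $T_{n+k}\equiv c_nT_k$, so $G$ is closed under addition with $c_{n+m}=c_nc_m$, and it contains negatives. Hence $G=d\mathbb{Z}$, and $S=G$ by Step 1.

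\textbf{Step 3: identify the generator.} Note that $S\cap\mathbb{Z}_{>0}$ is nonempty by periodicity, since $0\in S$ and the state repeats. Its least positive element is $N_0$, so $S=N_0\mathbb{Z}$. For positive $n$ this is exactly the claim that $M\mid T_n$ and $M\mid T_{n-1}$ hold if and only if $N_0(p^k)\mid n$.

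Notice that the full claim $u\equiv1$ is never needed; it is enough that $u$ is a unit. The main obstacle is precisely this point: checking that unit-ness (via Lemma \ref{lem1}) is enough to make $S$ closed under subtraction. Concretely, if $n,m\in S$ then $T_{n-m+k}\equiv c_m^{-1}T_{n+k}\equiv c_m^{-1}c_nT_k$, so $n-m\in S$. This is the step to verify with care.
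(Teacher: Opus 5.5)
Your argument is correct and is essentially the paper's: both use Lemma \ref{lem1} to make $T_{n+1}$ a unit modulo $p^k$, and Agronomof's identity to get $T_{n+j}\equiv T_{n+1}T_j \pmod{p^k}$. Your subgroup argument is just a cleaner packaging of the paper's reduction of $n$ modulo $N_0(p^k)$. One cosmetic fix: drop the heading ``show $u\equiv 1$'' in Step 2, since that congruence is false in general (for $p=7$ one has $N_0=16$ and $T_{17}=10609\equiv 4\pmod 7$), and you rightly never use it.
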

\begin{proof}
    Firstly let $T_{N_0(p^k)+1} = a \pmod{p^k}$; clearly by Lemma \ref{lem1}, $a$ is not divisible by $p$.
    Note that for $n > N_0(p^k)$ we have, by Lemma \ref{lem2}, that $T_{n}=aT_{n-N_0(p^k)}\pmod{p^k}$; inductively it follows that for any positive integer $n$, $p^k\mid T_n$ and $p^k\mid T_{n-1}$ both hold if and only if $N_0(p^k)\mid n$.
\end{proof}

\begin{lemma}\label{lem4}
Let $p$ be a prime and let $k$ be a positive integer.
If $p^k\mid T_n$ and $p^k\mid T_{n-1}$ for some positive integers $n$ and $k$, then the following congruences hold for every positive integer $m$:
\begin{align*}
    T_{mn-1} &\equiv T_{n+1}^{m-2}\left[m(T_{n+1}T_{n-1})+\tfrac{m(m-1)}{2}(T_n^2-2T_nT_{n-1}-T_{n-1}^2)\right] \pmod{p^{k+2}},\\
    T_{mn} &\equiv  T_{n+1}^{m-2}\left[m(T_{n+1}T_{n})+\tfrac{m(m-1)}{2}(-T_n^2+T_{n-1}^2)\right] \pmod{p^{k+2}},\\
    T_{mn+1} &\equiv T_{n+1}^{m-2}\left[T_{n+1}^2+\tfrac{m(m-1)}{2}(T_n^2+2T_nT_{n-1})\right] \pmod{p^{k+2}}.
\end{align*}
\end{lemma}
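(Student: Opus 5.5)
We will prove the three congruences by induction on $m$, using Agronomof's identity (Lemma \ref{lem2}) with $k=n$. Write $a=T_{n+1}$, $x=T_n$, $y=T_{n-1}$, so that $x\equiv y\equiv 0\pmod{p^k}$. The recurrence gives $T_{n-2}=T_{n+1}-T_n-T_{n-1}=a-x-y$, and more generally every term near index $n$ is a linear form in $a,x,y$.

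The idea is to encode Lemma \ref{lem2} as a matrix statement. Applying it with $(n,k)\mapsto(mn+j,\,n)$ for $j=-1,0,1$ expresses the vector $(T_{(m+1)n-1},T_{(m+1)n},T_{(m+1)n+1})$ as a fixed $3\times 3$ matrix $M$ times $(T_{mn-2},T_{mn-1},T_{mn},T_{mn+1})$. Using $T_{mn-2}=T_{mn+1}-T_{mn}-T_{mn-1}$, this becomes a $3\times3$ matrix acting on $(T_{mn-1},T_{mn},T_{mn+1})$. Equivalently, and more cleanly, we can use the companion matrix $C$ of the recurrence: the vector $v_j=(T_{j-1},T_j,T_{j+1})^{T}$ satisfies $v_{j+n}=C^n v_j$, and hence $v_{mn}=(C^n)^m v_0$ with $v_0=(0,0,1)^T$.

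The key step is to write $C^n=aI+E$, where the entries of $E$ are linear in $x,y$. Its columns are, up to ordering, $v_{n-1},v_n,v_{n+1}$ minus $a$ times standard basis vectors, and using $T_{n+2}=a+x+y$ and $T_{n-2}=a-x-y$ one computes $E$ explicitly. Its entries lie in $p^k\mathbb{Z}$ (and $a$ appears only on the diagonal up to that correction). Then
$$(aI+E)^m=a^mI+ma^{m-1}E+\tbinom m2 a^{m-2}E^2+\dotsb,$$
and since $E^3\equiv0\pmod{p^{3k}}$ while $3k\ge k+2$ for $k\ge1$, the expansion stops at $E^2$. Multiplying by $v_0$ picks out the last column. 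This yields $T_{mn+j}\equiv a^{m-2}\bigl[a^2\delta_{j,1}+m a\,E_{\cdot,3}+\binom m2 (E^2)_{\cdot,3}\bigr]$. Computing the third column of $E$, namely $(y,x,0)$ after subtracting $a$, and then $(E^2)_{\cdot,3}$, should reproduce the brackets $(ay,\ ax,\ a^2)$ for the linear terms and $(x^2-2xy-y^2,\ -x^2+y^2,\ x^2+2xy)$ for the quadratic terms.

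There is a subtlety. $E^2$ has entries of order $p^{2k}$, which is only $\ge p^{k+2}$ when $k\ge2$, so the quadratic terms are genuinely needed. The cubic terms vanish because $3k\ge k+2$ exactly when $k\ge1$. If the linear term of $E$ also has $a$-dependent pieces off the third column, they could contribute to $E^2$ through products like $E_{ij}E_{j3}$; these still have valuation $\ge 2k$ and must be tracked exactly. For the case $m=1$, where $a^{m-2}=a^{-1}$, $a$ is invertible mod $p$ by Lemma \ref{lem1}, so the expression makes sense.

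The main obstacle is carrying out the explicit computation of $E$ and the third column of $E^2$ correctly and checking that it matches the stated quadratic forms. The rest is the binomial expansion together with the valuation bookkeeping described above.
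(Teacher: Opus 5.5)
Your argument is correct. It is a closed-form version of the paper's proof rather than the same write-up. The paper fixes $n$ and inducts on $m$. It applies Lemma \ref{lem2} three times to express $T_{(m+1)n+j}$, $j\in\{-1,0,1\}$, through $T_{mn-1},T_{mn},T_{mn+1}$, then substitutes the inductive hypothesis and discards monomials of degree $3$ in $T_n,T_{n-1}$.

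Those three identities are exactly the rows of $v_{(m+1)n}=C^n v_{mn}$. So the paper computes $(C^n)^m e_3$ one factor at a time, and it must know the target formula in advance to check the inductive step. Your route produces the formula directly. Because $aI$ commutes with $E$, the binomial theorem applies, and the same degree-$3$ cancellation appears as $E^3\equiv 0\pmod{p^{3k}}$ with $3k\ge k+2$. The only computation left is $E(Ee_3)$. It also gives negative $m$, which the paper claims, since $(I+E/a)^{-1}\equiv I-E/a+E^2/a^2\pmod{p^{k+2}}$.

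One point in your sketch needs fixing, because taken literally it would derail the computation. The columns of $C^n$ are not $v_{n-1},v_n,v_{n+1}$. Since $e_1=v_{-1}$, $e_3=v_0$ and $e_2=v_1-v_0$, they are $C^ne_1=v_{n-1}$, $C^ne_2=v_{n+1}-v_n$ and $C^ne_3=v_n$. Using $v_{n+1}$ itself would put the unit $T_{n+2}=a+x+y$ off the diagonal. With the correct columns,
$$E=C^n-aI=\left(\begin{matrix}-x-y & x-y & y\\ y & -x & x\\ x & x+y & 0\end{matrix}\right),$$
so every entry is an integral linear form in $x,y$, with no $a$ anywhere.

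Your worry about $a$-dependent entries off the third column therefore never arises. The fallback you offered for it is also false. Such an entry would be a $p$-adic unit, so $E_{ij}E_{j3}$ would only be known to have valuation $\ge k$. Worse, both $E\equiv 0\pmod{p^k}$ and the truncation at $E^2$ would fail. It is the explicit shape of $E$, not a valuation estimate, that makes the argument work.

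With this $E$ you get
$$Ee_3=(y,x,0)^T,\qquad E^2e_3=(x^2-2xy-y^2,\ y^2-x^2,\ x^2+2xy)^T.$$
These are exactly the brackets in the statement, so the step you flagged as the main obstacle goes through.
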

% \liam{Do we want to include negative $m$? We would have to extend the proof.}
\begin{proof}
Let $n$ be fixed.
We will prove this lemma by induction on $m$.

\textbf{Base Case:} $m=1$, clearly $p\nmid T_{n+1}$, thus $T_{n+1}^{-1}$ is well-defined $\!\pmod {p^{k+2}}$.
With this in mind, it is easy to verify that the congruences hold for $m=1$.

\textbf{Inductive step:}
Assume the congruences hold for some positive integer $m$.
We will prove that they hold for $m+1$.
By Lemma \ref{lem2} we have that
\begin{align*}
    T_{(m+1)n-1} &= T_{(n-1)+mn} =(T_{n+1}-T_n-T_{n-1})T_{mn-1}+(T_{n}-T_{n-1})T_{mn}+T_{n-1}T_{mn+1}.
\end{align*}
From here we can invoke the induction hypothesis noting that any term $T_n^{a}T_{n-1}^b$ where $a+b=3$ will be divisible by $p^{k+2}$, so with simple algebra we get the desired congruence.
Similarly, using
\begin{align*}
    T_{(m+1)n} &= T_{n+mn} = T_{n-1}T_{mn-1}+(T_{n+1}-T_{n})T_{mn}+T_{n}T_{mn+1}
    \shortintertext{and}
    T_{(m+1)n+1} &= T_{(n+1)+mn} = T_{n}T_{mn-1}+(T_{n+2}-T_{n+1})T_{mn}+T_{n+1}T_{mn+1}
\end{align*}
we get the other congruences.
The proof that it holds for $m-1$ if it holds for $m$ is similar; thus we also get that the congruences hold for negative $m$.
\end{proof}

\begin{lemma}\label{lem5}
    Let $p\neq2$ be a prime.
    Suppose that $i = \nu_p\left({T_n}\right)$ and $j = \nu_p\left(T_{n-1}\right)$ are both positive and let $k=\min(i,j)$.
    Then the following hold for every integer $m$:
    \begin{enumerate}
        \item $\nu_p\left({T_{mn-1}}\right)\geqslant  k+\nu_p(m)$, with equality if and only if $j\leqslant i$.
        \item $\nu_p\left({T_{mn}}\right)\geqslant  k+\nu_p(m)$, with equality if and only if $i\leqslant j$.
    \end{enumerate}
\end{lemma}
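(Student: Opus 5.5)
We argue by induction on $\nu_p(m)$, with the case $p\nmid m$ as the base and the multiplication $m\mapsto pm$ as the inductive step. Both the base and the step come from Lemma \ref{lem4}. Throughout, $p\nmid T_{n+1}$ by Lemma \ref{lem1}, so the factor $T_{n+1}^{m-2}$ is a $p$-adic unit. It therefore never affects valuations, even for negative $m$.

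\textbf{Base case ($p\nmid m$).} Apply Lemma \ref{lem4} with exponent $k=\min(i,j)$; this is allowed since $p^k$ divides both $T_n$ and $T_{n-1}$. The bracket for $T_{mn-1}$ is $mT_{n+1}T_{n-1}$ plus a quadratic form in $T_n,T_{n-1}$. That quadratic form has valuation at least $2k\ge k+1$, and the congruence holds modulo $p^{k+2}$. Hence $T_{mn-1}\equiv mT_{n+1}^{m-1}T_{n-1}\pmod{p^{k+1}}$. It follows that $\nu_p(T_{mn-1})\ge k$, with equality if and only if $\nu_p(T_{n-1})=k$, i.e.\ $j\le i$. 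The same argument gives $T_{mn}\equiv mT_{n+1}^{m-1}T_n\pmod{p^{k+1}}$, so $\nu_p(T_{mn})\ge k$, with equality if and only if $i\le j$. The factor $\tfrac{m(m-1)}{2}$ is a $p$-integer because $p\ne2$.

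\textbf{Inductive step.} Let $n'=mn$ and suppose the claim holds for $m$ with $p\mid m$ allowed. Write $i'=\nu_p(T_{n'})$, $j'=\nu_p(T_{n'-1})$ and $k'=\min(i',j')$. By hypothesis $k'=k+\nu_p(m)$, and the index attaining the minimum is the same as for $n$. Now apply Lemma \ref{lem4} to $n'$ with exponent $k'$ and multiplier $p$. In the bracket, the term $pT_{n'+1}T_{n'-1}$ has valuation $1+j'$, while the quadratic term $\tfrac{p(p-1)}{2}(\cdots)$ has valuation at least $1+2k'\ge k'+2$. Since the congruence holds modulo $p^{k'+2}$, we obtain $T_{pn'-1}\equiv pT_{n'+1}^{p-1}T_{n'-1}\pmod{p^{k'+2}}$. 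Thus $\nu_p(T_{pn'-1})\ge k'+1$, with equality exactly when $j'=k'$. The same holds for $T_{pn'}$ with $i'$ in place of $j'$. This gives the claim for $pm$, with $k'+1=k+\nu_p(pm)$.

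The main point to check is that the precision $p^{k+2}$ in Lemma \ref{lem4} suffices. In both steps we only need the statement modulo $p^{k'+1}$ relative to the current $k'$, and the error terms have valuation at least $2k'+1$ (or $2k'$ in the base case). This is at least the required precision because $k'\ge1$, so the argument works.

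\textbf{Negative $m$ and $m=0$.} Negative $m$ is covered because Lemma \ref{lem4} holds for negative $m$, and we induct on $|m|$ in the same way. The case $m=0$ is trivial, since $T_{-1}=T_0=0$ and $\nu_p(0)=\infty$ make every statement hold formally.
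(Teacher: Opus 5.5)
Your proof is correct and is essentially the paper's argument: induction on $\nu_p(m)$, with the base case $p\nmid m$ read off from Lemma~\ref{lem4} modulo $p^{k+1}$, and the step $m\mapsto pm$ obtained by applying Lemma~\ref{lem4} with multiplier $p$ at $n'=mn$ modulo $p^{k'+2}$. Two small corrections. First, the hypothesis $p\neq 2$ is needed in the inductive step, where it gives $p\mid\tfrac{p(p-1)}{2}$ and hence your bound $1+2k'$ on the quadratic term; it is not needed in the base case, since $\tfrac{m(m-1)}{2}$ is always an integer. Second, at $m=0$ the ``equality if and only if'' clause fails literally when $i\neq j$, so that case should be excluded rather than called formally true.
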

\begin{proof}
We will show this by induction on $\nu_p(m)$.
Firstly, suppose that $m$ is not divisible by $p$.
Note that the first congruence in Lemma \ref{lem4} also holds modulo $p^{k+1}$; thus we have that
% \liam{Is this correct?}
\[ T_{mn-1}\equiv mT_{n+1}^{m-2}(T_{n+1}T_{n-1}) \pmod{p^{k+1}} \]
since the second term is clearly divisible by $p^{k+1}$.
From this we can clearly see that $p^{k}\mid T_{mn-1}$ and that $p^{k+1}\mid T_{mn-1}$ if and only if $p^{k+1}\mid T_{n-1}$, which is exactly the case when $j>i$.
Thus $\nu_p\left({T_{n-1}}\right)\geqslant  k$, with equality if and only if $j\leqslant i$.
A similar argument shows that $\nu_p\left({T_{mn}}\right)\geqslant  k$, with equality if and only if $i\leqslant j$.
Thus the lemma is true when $p \nmid m$.

Suppose the lemma is true for all $m$ where $\nu_p(m)<t$ for some $t\geqslant 1$, and consider some $m$ where $\nu_p(m)=t$.
Let $m^\prime=m/p$, $k^\prime=k+\nu_p(m)$, and $n^\prime=nm^\prime$.
Since $\nu_p(m^\prime)<t$, we have that $\nu_p\left({T_{m^\prime-1}}\right)\geqslant  k^\prime$, $j\leqslant i$, and $\nu_p\left({T_{m^\prime n}}\right)\geqslant  k^\prime$, with equality if and only if $i\leqslant j$.
Notice that since $p$ is odd, we have that $p\mid \tfrac{p(p-1)}{2}$ and thus by Lemma \ref{lem4} we have that
\begin{align*}
    T_{mn-1} = T_{p(m^\prime n)-1} &\equiv pT_{m^\prime n+1}^{p-2}(T_{m^\prime n+1}T_{m^\prime n-1}) \\
    \text{and} \qquad
    T_{mn} = T_{p(m^\prime n)} &\equiv pT_{m^\prime n+1}^{p-2}(T_{m^\prime n+1}T_{m^\prime n}) \pmod{p^{k^\prime +2}}.
\end{align*}
The lemma is therefore true for $m$.
\end{proof}

Now we prove Theorem \ref{mainr}.
\begin{proof}
Suppose for some prime $p$ we have that $\nu_p(T_{N_0})\neq\nu_p(T_{N_0-1})$.
By Lemma \ref{lem3}, we have that if both $\nu_p(T_n),\nu_p(T_{n-1})$ are non-zero then $N_0\mid n$.
Since $\nu_p(T_{N_0})\neq\nu_p(T_{N_0-1})$ we have, by Lemma \ref{lem5} that one of $\nu_p(T_{mN_0}-1),\nu_p(T_{mN_0})$ is equal to $\min(\nu_p(T_{N_0}),\nu_p(T_{N_0-1}))+\nu_p(m)$ and the other is strictly greater.
Hence, Item $\ref{p1}$ is true.

Now suppose that for some prime $p$ we have that $\nu_p(T_{N_0})=\nu_p(T_{N_0-1})$.
Thus by Lemma \ref{lem5} we have that if $n$ is a multiple of $N_0$ then $\nu_p(T_n)=\nu_p(T_{n-1})=\nu_p\left(\frac{n}{N_0}\right)+\nu_p(T_{N_0})$.
Hence, Item \ref{p2} holds as well.
\end{proof}

\subsection{Algebraic proof} \label{ssec:algebraic}
% Let $\{T_n\}_{n\ge 0}$ be the Tribonacci sequence.
For an integer $m$ let $N(m)$ be the period of the Tribonacci sequence modulo $m$.
It is well known that $N(2^k)=2^{k+1}$ for all $k\ge 1$.
Further, $N(11^k) = 10\times 11^k$ for all $k\ge 1$ and if $p\ne 2,11$, then $N(p^k)\mid p^{k-1}N(p)$, where 
% $$
% N(p)\mid \left\{\begin{matrix} p-1 & \text{if} & X^3-X^2-X-1 & {\text{\rm has~three~roots}}  & \pmod p\\
% p^2-1 & \text{if} & X^3-X^2-X-1 & {\text{\rm has  one ~root}} & \pmod p\\
% p^2+p+1 & \text{if} & X^3-X^2-X-1 & {\text{\rm has~no~roots}} & \pmod p
% \end{matrix}\right\}.
% $$
\[
N(p) \ \left| \ \begin{cases}
    p-1     & X^3-X^2-X-1 \ {\text{has three roots}}   \pmod p\\
    p^2-1   & X^3-X^2-X-1 \ {\text{has one root}}  \pmod p\\
    p^2+p+1 & X^3-X^2-X-1 \ {\text{has no roots}}  \pmod p
\end{cases} \right..
\]
% Recall that since $T_{-1}=T_0=0$, for every prime $p$ there exists a smallest positive integer $N_0\coloneq N_0(p)$ such that $T_{N_0}\equiv T_{N_0-1}\equiv 0\pmod p$.
Let $\lambda_p\coloneq T_{N_0+1}\pmod p$.
Thus, 
\begin{align*}
    (T_{N_0},T_{N_0+1},T_{N_0+2}) & \equiv (0,\lambda_p,\lambda_p) \equiv \lambda_p(0,1,1) \equiv \lambda_p(T_0,T_1,T_2) \pmod p.
\end{align*} 
If $k\coloneq k_p$ is the minimal positive integer such that $\lambda_p^k\equiv 1\pmod p$, we get that $N(p)=N_0(p)\cdot k$. 

We prove:
\begin{lemma} \label{lem:algebra}
    Assume $p\ne 2,11$.
    Then the following hold. 
    \begin{enumerate}
        \item[(i)] $N_0$ is the minimal positive integer such that $\lambda_1^{N_0}\equiv \lambda_2^{N_0}\equiv \lambda_3^{N_0}\pmod p$. 
        \item[(ii)] $k\in \{1,3\}$.
        Further, if $k=3$, then $p\equiv 1\pmod 3$.
    \end{enumerate}
\end{lemma}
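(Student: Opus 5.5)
Throughout, I work in the splitting field $\mathbb{K}$ of $P(X)=X^3-X^2-X-1$ over $\mathbb{Q}$, and reduce modulo a prime $\mathfrak{p}\mid p$ of its ring of integers. Because $p\ne 2,11$, the discriminant of $P$ (which is $-44$) is not divisible by $p$. Hence $\lambda_1,\lambda_2,\lambda_3$ are distinct units modulo $\mathfrak{p}$, and so are the coefficients $c_\lambda=(\lambda P'(\lambda))^{-1}$ in \eqref{eq:binet}.

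For (i), the plan is to show that the condition $T_{n-1}\equiv T_n\equiv 0\pmod p$ is equivalent to the shifted vector $(T_n,T_{n+1},T_{n+2})$ being a scalar multiple of $(T_0,T_1,T_2)$ modulo $p$. Indeed, if $T_{n-1}\equiv T_n\equiv 0$, then $T_{n+2}\equiv T_{n+1}$, so $(T_n,T_{n+1},T_{n+2})\equiv T_{n+1}(0,1,1)$. By linearity of the recurrence this gives $T_{n+j}\equiv T_{n+1}T_j$ for all $j$. Conversely, $T_{n+j}\equiv cT_j$ for all $j$ forces $T_{n-1}\equiv cT_{-1}=0$.

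Next, $T_{n+j}\equiv cT_j$ for all $j$ translates via Binet into
$$\sum_\lambda c_\lambda(\lambda^n-c)\lambda^j\equiv 0\pmod{\mathfrak p}\quad\text{for } j=0,1,2.$$
The Vandermonde matrix in the distinct residues $\lambda_i$ is invertible modulo $\mathfrak p$, and the $c_\lambda$ are units. So this holds iff $\lambda_i^n\equiv c$ for all $i$, i.e. iff $\lambda_1^n\equiv\lambda_2^n\equiv\lambda_3^n\pmod{\mathfrak p}$. Statement (i) follows, together with the identification $\lambda_p\equiv\lambda_i^{N_0}\pmod{\mathfrak p}$. Since the relation is symmetric under Galois conjugation, working modulo $\mathfrak p$ versus $p$ causes no problem.

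For (ii), the key identity is $\lambda_1\lambda_2\lambda_3=1$ (the constant term of $P$ is $-1$). Hence
$$\lambda_p^3\equiv(\lambda_1\lambda_2\lambda_3)^{N_0}=1 \pmod{\mathfrak p},$$
so the order $k$ of $\lambda_p$ divides $3$, giving $k\in\{1,3\}$. If $k=3$, then $\lambda_p$ is an element of order $3$ in $\mathbb{F}_p^*$; it lies in $\mathbb{F}_p$ because $\lambda_p=T_{N_0+1}\bmod p$ is an integer residue. Therefore $3\mid p-1$, i.e. $p\equiv 1\pmod 3$.

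The main obstacle I expect is making the passage between residues modulo $p$ and modulo $\mathfrak p$ in $\mathbb{K}$ airtight, especially the invertibility of the Vandermonde determinant. Its square is the discriminant $-44$, which is exactly where the hypothesis $p\ne 2,11$ enters. The remaining steps are routine.
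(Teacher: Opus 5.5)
Your proof is correct, and part (ii) is essentially the paper's argument. Both use $\lambda_1\lambda_2\lambda_3=1$ to see that the common value of the $\lambda_i^{N_0}$ cubes to $1$. You are more careful than the paper in noting that this common value is $T_{N_0+1}\bmod p$ and so lies in $\mathbb{F}_p$; the paper uses this tacitly.

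For (i) the routes differ. The paper divides the two conditions $T_{N_0}\equiv T_{N_0-1}\equiv 0$ by $c_3\lambda_3^{N_0}$ and treats them as a $2\times 2$ linear system in $X_i=(\lambda_i/\lambda_3)^{N_0}$, solved by Cramer's rule. It then checks by hand, via the minimal polynomial of $c_i/c_j$ and the fact that $\lambda_i-\lambda_j$ divides $44$, that the determinant $\Delta$ is supported only on primes above $2$ and $11$. This gives $X_i=1+\eta_i\gamma_i$ with $p\mid\gamma_i$; the converse is the same Binet computation as yours.

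You instead first use the recurrence to upgrade the two vanishing conditions to $T_{n+j}\equiv T_{n+1}T_j$ for all $j$. You then invert the full $3\times 3$ Vandermonde matrix, whose squared determinant is the discriminant $-44$. Your version is shorter and more conceptual. Shifting by $n$ acts diagonally with eigenvalues $\lambda_i^n$, and since the coordinates $c_\lambda$ of $(T_j)$ in the eigenbasis are all units, it sends $(T_j)$ to a multiple of itself iff it acts as a scalar. This makes the role of $p\nmid 44$ transparent and gives $\lambda_p\equiv\lambda_i^{N_0}$ for free.

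What the paper's heavier set-up buys is the explicit shape $X_i=1+\eta_i\gamma_i$, with $\gamma_i$ built from $p^{a_0}b_0$ and $p^{a_1}b_1$. That is exactly what feeds the $\delta_i$ and the valuation bookkeeping in its algebraic proof of Theorem~\ref{mainr}. Your argument yields only the congruences modulo $p$.

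One small repair is needed. Galois symmetry only shows that nothing depends on which prime ideal above $p$ you chose. To pass from congruences modulo every prime ideal above $p$ to congruences modulo $p\mathcal{O}_\mathbb{K}$, you also need $p$ to be unramified in $\mathbb{K}$, which holds since $p\nmid 44$. Alternatively, note that the inverse Vandermonde matrix and the $c_\lambda^{\pm1}$ all have entries in $\mathcal{O}_\mathbb{K}[1/44]$, so the inversion works directly modulo $p$, and $p\mathcal{O}_\mathbb{K}[1/44]\cap\mathcal{O}_\mathbb{K}=p\mathcal{O}_\mathbb{K}$. This is in effect what the paper does by treating $\eta_1,\eta_2$ as units in $\mathcal{O}_\mathbb{K}/p\mathcal{O}_\mathbb{K}$.
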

% \begin{theorem}
%     Assume $p\ne 2,11$.
%     Then the following hold. 
%     \begin{itemize}
%         \item[(i)] $N_0$ is the minimal positive integer such that $\lambda_1^{N_0}\equiv \lambda_2^{N_0}\equiv \lambda_3^{N_0}\pmod p$. 
%         \item[(ii)] $k\in \{1,3\}$.
%         Further, if $k=3$, then $p\equiv 1\pmod 3$.
%         \item[(iii)] If $\nu_p(T_{N_0})\ne \nu_{p}(T_{N_0-1})$, then $\nu_p(T_n)\ne \nu_p(T_{n-1})$ for any multiple $n$ of $N_0$.
%         \item[(iv)] If $\nu_p(T_{N_0})=\nu_p(T_{N_0-1})$ then for any multiple $n$ of $N_0$ we have that 
%         $$
%         \nu_p(T_{n})=\nu_p(T_{n-1})=\nu_p(T_{N_0})+\nu_p(n/N_0).
%         $$
%     \end{itemize}
% \end{theorem}
% \kerry{perhaps we make (i) and (ii) into a lemma so that (iii) and (iv) agree with the main theorem?}
% \liam{I agree}

\begin{proof}
Recall that, by Equation \eqref{eq:binet}, 
$$
T_n=\sum_{i=1}^3 c_i \lambda_i^n,
$$
where $c_i=\lambda_i/P'(\lambda_i)$ for $i=1,2,3$, where $P(X)\coloneq X^3-X^2-X-1$.
We write the equations $T_{N_0}\equiv T_{N_0-1}\equiv 0\pmod p$ as 
$$
\sum_{i=1}^3 c_i \lambda_i^{N_0-j}=p^{a_j}b_j,\quad~\text{where}\quad a_j\ge 1\quad \text{and}\quad p\nmid b_j\quad \text{for}\quad j=0,1.
$$
This we rewrite as 
\begin{equation}
\label{eq:1}
d_{1,j}X_1+d_{2,j}X_2
% = \sum_{i=1}^2 d_{i,j} X_i
= 1+p^{a_j} e_j,\quad \text{for}\quad j=1,2,
\end{equation}
where
\begin{align*}
    X_i &= (\lambda_i/\lambda_3)^{N_0}, \\
    \left(\begin{matrix} d_{1,0} & d_{2,0}\\ d_{1,1} & d_{2,1}\end{matrix}\right) &\coloneq \left(\begin{matrix} c_1/c_3 & c_2/c_3 \\ (c_1/c_3)(\lambda_1/\lambda_3)^{-1} & (c_2/c_3)(\lambda_2\lambda_3)^{-1}\end{matrix}\right),
    \shortintertext{and}
    e_j &\coloneq b_j/(c_3\lambda_3^{N_0})\quad \text{for}\quad  j=0,1.
\end{align*}
We treat \eqref{eq:1} as a system of two equations in two unknowns $(X_1,X_2)$.
Solving it with Cramer's rule, we get 
$$
(X_1,X_2)=\left(\frac{\Delta_1}{\Delta},\frac{\Delta_2}{\Delta}\right),
$$
where 
$$
\Delta\coloneq \left|\begin{matrix} d_{1,0} & d_{1,1}\\ d_{2,0} & d_{2,1}\end{matrix}\right|=\frac{c_1c_2\lambda_3(\lambda_1-\lambda_2)}{c_3^2\lambda_1\lambda_2}.
$$
Since $c_i/c_j$ are conjugates for all $1\le i\ne j\le 3$ of minimal polynomial
$$
11X^6 + 33X^5 + 64X^4-73X^3 + 64X^2 + 33X + 11
$$
and $\lambda_i-\lambda_j$ is an algebraic integer dividing $\text{Res} (P(X),P'(X))=44$, it follows that $\Delta$ is an algebraic number which is a product of prime ideals (at integer exponents) dividing 
$2\times 11$.
Further, 
\begin{align*}
\Delta_1 & = \left|\begin{matrix} 1+p^{a_0}e_0  & (c_2/c_3)\\ 1+p^{a_1}e_1 & (c_2/c_3) (\lambda_2/\lambda_3)^{-1}\end{matrix} \right| = \left|\begin{matrix} 1 & (c_2/c_3)\\ 1 & (c_2/c_3)(\lambda_2/\lambda_3)^{-1}\end{matrix}\right| + \left|\begin{matrix} p^{a_0} e_0 & (c_2/c_3)\\ p^{a_1} e_1 & (c_2/c_3)(\lambda_2/\lambda_3)^{-1}\end{matrix}\right|\\
&=\Delta+(c_2/c_3) \left|\begin{matrix} p^{a_0}e_0 & 1\\ p^{a_1} e_1 & \lambda_3/\lambda_2\end{matrix}\right|.   
\end{align*}
Similarly, 
\begin{align*}
\Delta_2 & = \left|\begin{matrix} (c_1/c_3)  & 1+p^{a_0} e_0\\ (c_1/c_3)(\lambda_1/\lambda_3)^{-1}  & 1+p^{a_1}e_1\end{matrix} \right| = \left|\begin{matrix} (c_1/c_3) & 1\\ (c_1/c_3)(\lambda_1/\lambda_3)^{-1} & 1\end{matrix}\right| + \left|\begin{matrix} (c_1/c_3) & p^{a_0}e_0 \\ (c_1c_3)(\lambda_1/\lambda_3)^{-1} & p^{a_1}e_1\end{matrix}\right| \\
&= \Delta+(c_1/c_3) \left|\begin{matrix} 1 & p^{a_0}e_0 \\ \lambda_3/\lambda_1 & p^{a_1}e_1\end{matrix}\right|.
\end{align*}
Since $e_j=b_j/(c_3\lambda_3)^{N_0}$ for $j=0,1$, and $\lambda_3$ is a unit, it follows that 
\begin{align*}
X_1 & = 1+\eta_1 \left|\begin{matrix} p^{a_0} (b_0/\lambda_3^{N_0}) & 1\\ p^{a_1}( b_1/\lambda_3^{N_0} ) & \lambda_3/\lambda_2\end{matrix} \right| = 1+\eta_1 \gamma_1 \quad\text{and} \\
X_2 & = 1+\eta_2  \left|\begin{matrix} 1 & p^{a_0} (b_0/\lambda_3^{N_0} )\\ \lambda_3/\lambda_1 & p^{a_1} (b_1/\lambda_3^{N_0} )\end{matrix} \right| = 1+\eta_2\gamma_2.
\end{align*}
where 
$$
\eta_1\coloneq \frac{c_2/c_3}{\Delta}, \quad \eta_2\coloneq \frac{c_1/c_3}{\Delta},
$$
are algebraic numbers which are products of prime ideals to positive or negative powers dividing $2\times 11$, while 
$$
\gamma_1\coloneq \left|\begin{matrix} p^{a_0} (b_0/\lambda_3^{N_0}) & 1\\ p^{a_1}( b_1/\lambda_3^{N_0} ) & \lambda_3/\lambda_2\end{matrix} \right| \quad\text{and}\quad \gamma_2\coloneq  \left|\begin{matrix} 1 & p^{a_0} (b_0/\lambda_3^{N_0} )\\ \lambda_3/\lambda_1 & p^{a_1} (b_1/\lambda_3^{N_0} )\end{matrix} \right|
$$
are algebraic integers which are multiples of $p$.
In particular, $\eta_1,~\eta_2$ are units in $\mathcal{O}_\mathbb{K}/p\mathcal{O}_\mathbb{K}$, where $\mathbb{K}\coloneq \mathbb{Q}(\lambda_1,\lambda_2)$, and furthermore 
$(X_1,X_2)\equiv (1,1)\pmod p$.
Hence,
$$
(\lambda_1/\lambda_3)^{N_0}\equiv (\lambda_2/\lambda_3)^{N_0}\equiv 1\pmod p,
$$
so $\lambda_1^{N_0}\equiv \lambda_2^{N_0}\equiv \lambda_3^{N_0}\pmod p$.
This shows that if $N_0$ satisfies $T_{N_0}\equiv T_{N_0-1}\equiv 0\pmod p$, then 
$\lambda_1^{N_0}\equiv \lambda_2^{N_0}\equiv \lambda_3^{N_0}\pmod p$.
Conversely, if $\lambda_1^{N_0}\equiv \lambda_2^{N_0}\equiv \lambda_3^{N_0}\pmod p$ then  denoting by $\lambda^{N_0}$ the above common value modulo $p$, we get by the Binet formulas that 
\begin{align*}
T_{N_0} &= c_1\lambda_1^{N_0}+c_2\lambda_2^{N_0}+c_3\lambda_3^{N_0}\equiv \lambda^{N_0}(c_1+c_2+c_3) \equiv \lambda^{N_0}T_0\equiv 0\pmod p, \\
\shortintertext{and} 
T_{N_0-1} & = c_1\lambda_1^{N_0-1}+c_2\lambda_2^{N_0-1}+c_3\lambda_3^{N_0-1} \\
&\equiv \lambda^{N_0}(c_1\lambda_1^{-1}+c_2\lambda_2^{-1}+c_3\lambda_3^{-1}) \equiv \lambda^{N_0} T_{-1}\equiv 0\pmod p.
\end{align*}
This proves item (i).
For item (ii), note that 
$$
1=\left(\frac{\lambda_3}{\lambda_1}\right)^{N_0}\cdot \left(\frac{\lambda_3}{\lambda_2}\right)^{N_0}\cdot \left(\frac{\lambda_3}{\lambda_3}\right)^{N_0}=\left(\frac{\lambda_3^3}{\lambda_1\lambda_2\lambda_3}\right)^{N_0}
=\lambda_3^{3N_0},
$$
so $\lambda_3^{3N_0}\equiv 1\pmod p$ and the same is true for $\lambda_3$ replaced by $\lambda_1$ or $\lambda_2$.
This shows that $N\mid 3N_0$, so $k\in \{1,3\}$.
If $k>1$, then $\lambda^{N_0}\coloneq \lambda_p$ is different than $1$ and is an element of order $3$ modulo $p$.
This exists only if $3\mid p-1$, which proves (ii).
\end{proof}

We now move to an algebraic proof of Theorem \ref{mainr} for primes not equal to $2$ or $11$:
\begin{proof}
We first prove item \ref{p2}, and then item \ref{p1}.
For item \ref{p2}, assume that $a_0=a_1$.
Then
\begin{align*}
    \left(\frac{\lambda_1}{\lambda_3}\right)^{N_0} &= X_1=1+p^{a_0} \delta_1,\quad \text{where}\quad \delta_1\coloneq \frac{c_2/c_3}{\Delta\lambda_3^{N_0}} (b_0(\lambda_3/\lambda_2)-b_1) \\
    \shortintertext{and similarly,}
    \left(\frac{\lambda_2}{\lambda_3}\right)^{N_0} &= X_2=1+p^{a_0}\delta_2,\quad \text{where}\quad \delta_2\coloneq \frac{c_1/c_3}{\Delta\lambda_3^{N_0}} (b_1-b_0(\lambda_3/\lambda_1)).
\end{align*}
Note that 
\begin{equation} \label{eq:2}
    \delta_i=\frac{1}{p^{a_0}} \left(\left(\frac{\lambda_i}{\lambda_3}\right)^{N_0}-1\right) \quad \text{for}\quad i=1,2
\end{equation}
are conjugated algebraic numbers via the Galois automorphism of $\mathbb{K}$ which keeps $\lambda_3$ fixed (so also $c_3$ fixed) and swaps $\lambda_1$ and $\lambda_2$ (hence, also $c_1$ and $c_2$). 
We calculate
$$
N_{\mathbb{K}/\mathbb{Q}} (\delta_i)=\frac{1}{44} \left(b_0^6+4b_0^5b_1+11b_0^4b_1^2+12(b_0b_1)^3+11b_0^2b_1^4+4b_0b_1^5+b_1^6\right)
$$
for $i=1,2$, and we get a rational number whose denominator divides $44$.
By \eqref{eq:2} and the fact that $(\lambda_1/\lambda_3)^{N_0}-1$ is an algebraic integer, we get that in fact the norm of $\delta_1$ is a rational number whose denominator divides $p^{6a_0}$.
Since $p\ne 2,11$, we get that $\delta_1$ is an algebraic integer, therefore so is $\delta_2$. 
By induction on $\nu_p(m)$ using the binomial formula, we get that for all $m\ge 1$, 
\begin{align}
\label{eq:3}
\left(\frac{\lambda_2}{\lambda_3}\right)^{N_0m} &= 1+mp^{a_0}\delta_1 \pmod {p^{\nu_p(m)+a_0+1}\delta_1^2}), \ \text{and}\nonumber\\
\left(\frac{\lambda_2}{\lambda_3}\right)^{N_0m} & = 1+mp^{a_0}\delta_2 \pmod {p^{\nu_p(m)+a_0+1}\delta_2^2}).
\end{align}
Working backwards, we get
$$
c_1\lambda_1^{mN_0}+c_2\lambda_2^{N_0m}+c_3\lambda_3^{N_0m}\equiv mp^{a_0}b_0 \lambda_3^{N_0-1} \pmod {p^{\nu_p(m)+a_0+1}}.
$$
Thus,  have that 
$$
\frac{c_1\lambda_1^{mN_0}+c_2\lambda_2^{N_0m}+c_3\lambda_3^{N_0m}}{p^{\nu_p(m)+a_0}}\equiv \left(\frac{m}{p^{\nu_p(m)}}\right)b_0\lambda_0^{N_0-1} \pmod {p}.
$$
The number on the left is  a rational number while the number on the right is an algebraic integer coprime to $p$ since $p\nmid m/p^{\nu_p(m)}$ and $p\nmid b_0$.
This shows that the number on the left above is an algebraic integer coprime to $p$.
Thus, 
$$
\nu_p(T_{N_0m})=a_0+\nu_p(m)=\mu_p(T_{N_0})+\nu_p((N_0m)/N_0),
$$
which is what we wanted.
The same argument works for $\nu_p(T_{N_0m-1})$. 

The argument for item \ref{p1} is similar for the smaller of $a_0$, $a_1$.
Namely, let us assume for the sake of the argument that $a_0<a_1$.
We then get again that 
$$
\left(\frac{\lambda_1}{\lambda_3}\right)^{N_0}=1+p^{a_0}\delta_1\quad \text{and}\quad \left(\frac{\lambda_2}{\lambda_3}\right)^{N_0}=1+p^{a_0}\delta_2,
$$
except that now in the formulas for $\delta_1,~\delta_2$ we need to replace $b_1$ by $b_1p^{a_1-a_0}$.
Formulas \eqref{eq:3} still work.
Substituting formulas \eqref{eq:3}, we get again that 
$$
c_1\lambda_1^{mN_0}+c_2\lambda_2^{N_0m}+c_3\lambda_3^{N_0m}\equiv mp^{a_0}b_0 \lambda_3^{N_0-1} \pmod {p^{\nu_p(m)+a_0+1}},
$$
so that $\nu_p(T_{N_0m})=\nu_p(T_{N_0})+\nu_p(m)$.
However, when we substitute these in $T_{N_0m-1}$ we get that 
$$
c_1\lambda_1^{mN_0-1}+c_2\lambda_2^{N_0m-1}+c_3\lambda_3^{N_0m-1}\equiv mp^{a_1}b_1\lambda_3^{N_0-1} \pmod {p^{\nu_p(m)+a_0+1}}
$$
and since $a_1\ge a_0+1$, this shows that 
$$\nu_p(T_{N_0m-1})\ge a_0+1+\nu_p(m)>\nu_p(T_{N_0m}).
$$ 
If we assume instead that $a_0>a_1$, we can similarly show that $\nu_p(T_{N_0m}) > \nu_p(T_{N_0m-1})$ holds for all positive integers $m$. 
\end{proof}

\section{Tribonacci-Weifrich primes} \label{sec:weifrich}

Recall that by Equation \eqref{eq:binet}, 
$$
T_n=\sum_{i=1}^3 c_i\lambda_i^n\qquad \text{for}\quad n\in \mathbb{Z},
$$
where $c_i\coloneq \lambda_i/P'(\lambda_i)$ for $i=1,2,3$ are the roots of $P(X)=X^3-X^2-X-1$.
In this section we assume (although that is not needed), that $\lambda_1$ is real and $\lambda_2,~\lambda_3$ 
are complex conjugated.
Let $p\ne 2,11$.
Our previous arguments show that if $p^2\mid T_{N_0}$ and $p^2\mid T_{N_0-1}$, then if $\pi$ is any prime ideal of $\mathcal{O}_\mathbb{K}$
dividing $p$, then 
$$
\sum_{i=1}^3 c_i\lambda_i^{N_0-j}\equiv 0\pmod {\pi^2},\quad j=0,1,
$$
which in turn entails that $(\lambda_1/\lambda_2)^{N_0}\equiv 1\pmod {\pi^2}$. 
Furthermore, either $N_0$ or $3N_0$ is the least period of $\{T_n\}_{n}$ modulo $\pi$ so in particular $N_0\mid  N(\pi)-1$.
It follows that with $\varepsilon\coloneq \lambda_1/\lambda_2$, we have
\begin{equation} \label{eq:4}
    \varepsilon^{N(\pi)-1} -1\equiv 0\pmod {\pi^2}.
\end{equation}
Since $\varepsilon\in \mathcal{O}_\mathbb{K}$, where $\mathbb{K}\coloneq \mathbb{Q}(\alpha,\beta)=\mathbb{Q}(\varepsilon)$ is non-zero and not a root of unity, it follows that $\varepsilon$ is an \emph{admissible Wiefrich base} in the terminology of \cite{FM}.
Furthermore, a nonzero prime ideal $\pi$ for which relation \eqref{eq:4} is satisfied is called a \emph{base-$\varepsilon$ Wiefrich prime} by analogy with the classical Wiefrich primes which are primes $p$ such that $2^{p-1}\equiv 1\pmod {p^2}$.
For our situation, let us call a prime number $p$ such that there exists a prime ideal $\pi\in \mathcal{O}_\mathbb{K}$ with the property that relation \eqref{eq:4} holds a \emph{Tribonacci--Wiefrich prime}.
The ring $\mathcal{O}_\mathbb{K}$ has a very simple arithmetic structure.
Namely, it is monogenic, so $\mathcal{O}_\mathbb{K}=\mathbb{Z}[\eta]$, where $\eta\coloneq \sqrt[3]{\varepsilon}$ is an appropriate cubic root of $\varepsilon$ living inside $\mathbb{K}$ (see Proposition 5.1 in \cite{BLNOW}).
The minimal polynomial of $\eta$ is 
$$
X^6 +X^5 + 2X^4 + 3X^3 + 2X^2 +X + 1.
$$
Furthermore, $\mathcal{O}_\mathbb{K}$ has class number $1$.
Building on work of Silverman \cite{Sil}, Fellini and Murty show in Theorem 2.4 in \cite{FM} that a number field analogue of the Masser $abc$ conjecture (formulated by J. Browkin \cite{Brow} 20 years ago) for the field $\mathbb{K}$ implies that there are infinitely many non Tribonacci--Wiefrich primes.
So, we record this as a proposition:

\begin{proposition}
    The $abc$ conjecture for the number field $\mathbb{K}$ implies that there are infinitely many primes which are not Tribonacci--Wiefrich.
\end{proposition}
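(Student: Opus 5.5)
The plan is to reduce the proposition to Theorem 2.4 of \cite{FM}. That theorem says: for a number field $\mathbb{K}$ and an admissible Wieferich base $\varepsilon\in\mathcal{O}_\mathbb{K}$ (nonzero, not a root of unity), the $abc$ conjecture for $\mathbb{K}$ gives infinitely many prime ideals $\pi$ with
\[
\varepsilon^{N(\pi)-1}\not\equiv 1\pmod{\pi^2}.
\]
In fact it gives $\gg\log x$ of them with norm up to $x$, in the style of Silverman \cite{Sil}. So the work is in checking the hypotheses for our $\varepsilon=\lambda_1/\lambda_2$ and in passing from prime ideals to rational primes.

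\textbf{Hypotheses.} First, $\varepsilon$ is nonzero since the $\lambda_i$ are units. Second, $\varepsilon$ is an algebraic integer, because $\lambda_1/\lambda_2=\lambda_1^2\lambda_3$, using $\lambda_1\lambda_2\lambda_3=1$. Third, $\varepsilon$ is not a root of unity: $|\lambda_1|>1$ is the real Tribonacci constant while $|\lambda_2|<1$, so $|\varepsilon|\neq1$. The field is $\mathbb{K}=\mathbb{Q}(\varepsilon)$, the degree-six splitting field, and we assume the $abc$ conjecture in Browkin's form for this $\mathbb{K}$. Theorem 2.4 then yields an infinite set $\mathcal{S}$ of non-Wieferich prime ideals.

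\textbf{From ideals to rational primes.} Here lies the main subtlety. By our definition, $p$ is Tribonacci--Wieferich if \emph{some} $\pi\mid p$ satisfies \eqref{eq:4}. So a non-Wieferich $\pi$ alone does not make $p$ non-Tribonacci--Wieferich: another prime above the same $p$ could be Wieferich. I would handle this in one of two ways.

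\emph{Option (a).} Check what Fellini--Murty actually prove. Silverman-type arguments produce primes $\pi$ dividing $\varepsilon^n-1$ exactly once, for primitive-divisor-type reasons. The Galois group permutes the primes above $p$ and sends $\varepsilon$ to the other ratios $\lambda_i/\lambda_j$. If the construction can be run simultaneously for all conjugates, with $\varepsilon^n-1$ replaced by the product of its Galois conjugates (which is a rational integer up to units), then squarefreeness of the relevant part via $abc$ gives rational primes $p$ with $p\,\|$ that norm. This forces every $\pi\mid p$ to be non-Wieferich for the appropriate conjugate bases.

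\emph{Option (b).} Be more modest and interpret the conclusion as being about infinitely many prime ideals $\pi$. Then note that each rational $p$ has at most six primes above it, so infinitely many ideals give infinitely many distinct underlying primes $p$, and record these.

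\textbf{Expected obstacle.} The hard step is rigorously bridging the ``some $\pi$'' definition to a conclusion about the rational prime $p$. I would try option (a) first, using the fact that \eqref{eq:4} came from $p^2$ dividing both $T_{N_0}$ and $T_{N_0-1}$. Since this condition is Galois-invariant, if $p$ is Tribonacci--Wieferich in the original sense then \emph{every} $\pi\mid p$ satisfies \eqref{eq:4}. Hence a single non-Wieferich $\pi$ above $p$, which is what \cite{FM} supplies, suffices to exclude $p$. Finally, discard the finitely many exceptions $p=2,11$.
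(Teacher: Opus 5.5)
Your argument is essentially the paper's: you check that $\varepsilon=\lambda_1/\lambda_2$ is a nonzero algebraic integer that is not a root of unity (your verifications via $\lambda_1/\lambda_2=\lambda_1^2\lambda_3$ and $|\varepsilon|=\lambda_1^{3/2}\neq 1$ are fine) and invoke Theorem 2.4 of \cite{FM}, and your closing observation supplies exactly the step the paper leaves implicit when it concludes that $\min\{\nu_p(T_{N_0}),\nu_p(T_{N_0-1})\}=1$ for infinitely many $p$: $p^2\mid T_{N_0}$ and $p^2\mid T_{N_0-1}$ force \eqref{eq:4} at \emph{every} $\pi\mid p$, so a single non-Wieferich $\pi$ over $p$ excludes $p$, and only finitely many $\pi$ lie over each $p$. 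Be aware that this bridge proves the statement only for the introduction's definition, not for the literal ``some $\pi$'' definition of Section~\ref{sec:weifrich}, since conjugating $\pi$ replaces $\varepsilon$ by another ratio $\lambda_i/\lambda_j$ and one would then need all three ratios non-Wieferich at one $\pi$; also, your option (a) could not work as phrased, because $N_{\mathbb{K}/\mathbb{Q}}(\varepsilon^n-1)=44E_n^2$ with $E_n=\prod_{i<j}\frac{\lambda_i^n-\lambda_j^n}{\lambda_i-\lambda_j}\in\mathbb{Z}$, so no $p\nmid 22$ divides it exactly once.
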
 

In fact, the argument from \cite{FM} is quantitative giving that under the above assumptions the number of such primes $p\le x$ is at least $c_1\log x/\log\log x$ where $c_1>0$ is some positive constant.
This gives that under accepted conjectures, like the $abc$ conjecture, there should be infinitely many primes $p$ such that 
$$
\min\{\nu_p(T_{N_0}),\nu_p(T_{N_0-1})\}=1.
$$ 
We conjecture that there are infinitely many primes $p$ such that even 
$$\max\{\nu_p(T_{N_0}), \nu_p(T_{N_0-1})\}=1,
$$ 
but we have been unable to identify classical conjectures which may help to give an heuristic as to why this last conjecture should hold.
We leave such a task to the interested reader.  

\begin{proposition}
    Let $p\neq 3$ be a prime, then $p$ is Tribonacci-Weifrich if and only if $N(p)=N(p^2)$.
\end{proposition}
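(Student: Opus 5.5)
We read "Tribonacci--Wiefrich" in the sense of the abstract and the opening of Section \ref{sec:weifrich}: $p^2\mid T_{N_0}$ and $p^2\mid T_{N_0-1}$, where $N_0=N_0(p)$. With this reading the statement becomes a statement about $N_0(p^2)$ and Lemma \ref{lem3}. The prime $p=2$ is covered by the explicit value $N(2^k)=2^{k+1}$, which gives $N(2)\ne N(4)$. By Lemma \ref{lem5} and the valuation $\nu_2(T_n)$ from the introduction, $\min\{\nu_2(T_{N_0}),\nu_2(T_{N_0-1})\}=1$ for $p=2$, so $2$ is not Tribonacci--Wiefrich and both sides fail. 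For $p=11$ we use $N(11^k)=10\cdot 11^k$ together with the computation in Section \ref{sec:p=11}, which checks that $T_{\ell+110}\not\equiv T_\ell \pmod{121}$ for $\ell=0,109$. This rules out $p^2\mid T_{N_0},T_{N_0-1}$. So for $p=2,11$ both sides are false. From now on assume $p\neq 2,3,11$.

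First we translate the Wiefrich condition into a statement about $N_0$. By Lemma \ref{lem3} with $k=2$, $p^2$ divides both $T_n$ and $T_{n-1}$ if and only if $N_0(p^2)\mid n$. Since $N_0(p)\mid N_0(p^2)$, the prime $p$ is Tribonacci--Wiefrich if and only if $N_0(p^2)=N_0(p)$. In general $N_0(p^2)=N_0(p)\cdot p^{e}$ with $e\in\{0,1\}$: by Lemma \ref{lem5}, $\nu_p(T_{pN_0-1})$ and $\nu_p(T_{pN_0})$ are both at least $2$.

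Next we relate $N$ to $N_0$. Modulo $p^j$ we have $(T_{N_0(p^j)},T_{N_0(p^j)+1},T_{N_0(p^j)+2})\equiv a_j(0,1,1)$, where $a_j=T_{N_0(p^j)+1}$. Hence $N(p^j)=N_0(p^j)\cdot k_j$, where $k_j$ is the order of $a_j$ modulo $p^j$; for $j=1$ this is the factorization $N(p)=N_0(p)k$ given before Lemma \ref{lem:algebra}. Lemma \ref{lem:algebra}(ii) gives $k_1\in\{1,3\}$.

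We also need to control $k_2$. Reducing modulo $p$ the relation $T_{n+N_0(p^2)}\equiv a_2T_n \pmod{p^2}$ from the proof of Lemma \ref{lem3} shows that $a_2\equiv \lambda_p^{p^e}\pmod p$, where $\lambda_p=T_{N_0(p)+1}\bmod p$. The main obstacle is to show that $k_2=k_1$, i.e.\ that the order of $a_2$ modulo $p^2$ is not divisible by $p$. We plan to do this by redoing the argument of Lemma \ref{lem:algebra}(ii) modulo $\pi^2$. The relation $\lambda_1^{M}\equiv\lambda_2^{M}\equiv\lambda_3^{M}\pmod{p^2}$ with $M=N_0(p^2)$ follows from the Cramer computation in that proof, run with $a_j\ge 2$ and using that $\eta_1,\eta_2$ are $p$-units. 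Combined with $\lambda_1\lambda_2\lambda_3=1$, it gives $a_2^3\equiv(\lambda^{M})^3\equiv 1\pmod {p^2}$. So $a_2$ has order dividing $3$, and since $p\ne3$ its order is the same modulo $p$ and modulo $p^2$. Because $p^e$ is prime to $3$, $\lambda_p^{p^e}$ has the same order as $\lambda_p$, and hence $k_2=k_1$. This is where the hypothesis $p\neq 3$ is used.

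Combining the steps, $N(p^2)/N(p)=N_0(p^2)/N_0(p)=p^{e}$. Therefore $N(p)=N(p^2)$ holds if and only if $e=0$, which holds if and only if $p$ is Tribonacci--Wiefrich.
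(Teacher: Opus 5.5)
Your proof is correct, but it is organized differently from the paper's. The paper uses a divisibility squeeze. If $p^2\mid T_{N_0},T_{N_0-1}$, the Cramer argument gives $\lambda_i^{3N_0}\equiv 1\pmod{p^2}$, so $N(p^2)\mid 3N_0\mid 3N(p)$. Combining this with the quoted fact $N(p^2)\mid pN(p)$ and $\gcd(3,p)=1$ forces $N(p^2)=N(p)$. For the converse, $N(p^2)=N(p)\mid 3N_0$ gives $p^2\mid T_{3N_0},T_{3N_0-1}$, and Lemma~\ref{lem5} with $m=3$ (where $\nu_p(3)=0$) carries the minimum valuation back to $N_0$.

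You instead prove the sharper, unconditional identity $N(p^2)/N(p)=N_0(p^2)/N_0(p)=p^{e}$ with $e\in\{0,1\}$. You split $N(p^j)=N_0(p^j)k_j$ and get $e$ from Lemmas~\ref{lem3} and~\ref{lem5} (with $m=p$). You then show $k_2=k_1$: the multiplier $a_2$ satisfies $a_2^3\equiv 1\pmod{p^2}$, and for $p\ne 3$ such an element has the same order modulo $p$ and modulo $p^2$.

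Your route costs more bookkeeping, but it buys two things. It does not need $N(p^2)\mid pN(p)$ as an input; it re-derives it. It also shows exactly where $p\neq 3$ enters. For $p=3$ one has $N_0(9)=N_0(3)=13$, yet $a_2=T_{14}\equiv 4\pmod 9$ has order $3$. Hence $N(9)=39\neq N(3)$, which matches the paper's remark after the proposition.

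There are two small slips in your treatment of the excluded primes, neither of which affects the argument:
\begin{itemize}
\item Lemma~\ref{lem5} is stated only for $p\neq 2$, so for $p=2$ simply use $N_0(2)=4$ and $T_3=2$.
\item For $p=11$, what you need is $N_0(11)=110$ (read off from $\mathcal{Z}_{11}$) together with the case $\ell=0$, i.e.\ $121\nmid T_{110}$. The case $\ell=109$ concerns $T_{219}-T_{109}$ and is not needed.
\end{itemize}
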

\begin{proof}
    We ignore $p=2,11$ since they are not Tribonacci-Weifrich and do not satisfy $N(p^2)=N(p)$.
    Now letting $p\nmid 66$, be Tribonacci-Weifrich, we have that $(\lambda_3/\lambda_i)^{N_0}\equiv 1\pmod{p^2}$ for $i=1,2,3$, we have that $\lambda_3^{3N_0}\equiv1\pmod{p^2}$.
    So $N(p^2)\mid 3N_0\mid 3N(p)$; since $3,p$ are relatively prime and $N(p^2)\mid pN(p)$, we must have that $N(p^2)=N(p)$.
    Conversely suppose $N(p^2)=N(p)$, thus $p^2\mid T_{3N_0},T_{3N_0-1}$; since at least one of the inequalities in Lemma \ref{lem5} holds, we can conclude that $\min\{\nu_p(T_{N_0}),\nu_p(T_{N_0-1})\}\ge 2$.  
\end{proof}

\subsection*{Computational Results} \label{ssec:compu}
A quick computation shows that $3$ is Tribonacci-Weifrich, but $N(9)\neq N(3)$.
Some code was written in SageMath to calculate $N_0(p)$ as well as $v_p(T_{N_0(p)})$ and $v_p(T_{N_0(p)-1})$, and it was found that for $ p < 10^7$, we have that $v_p(T_{N_0(p)}) = v_p(T_{N_0(p)-1}) = 1$ except for the cases shown in Table \ref{table:calcs}.
\begin{table}[h]
    \tbl{Table of values of $N_0(p)$, $v_p(T_{N_0(p)-1})$, and $v_p(T_{N_0(p)})$ for primes $p$ for which either of the latter two are greater than $1$.}
    {\begin{tabular}{cccc}
        \toprule
        $p$     & $N_0(p)$      & $v_p(T_{N_0(p)-1})$   & $v_p(T_{N_0(p)})$ \\ \midrule
        $2$     & $4$           & $1$                   & $2$ \\
        $3$     & $13$          & $2$                   & $2$ \\
        $5$     & $31$          & $2$                   & $1$ \\
        $7$     & $16$          & $2$                   & $1$ \\
        $97$    & $3169$        & $2$                   & $1$ \\
        $103$   & $17$          & $1$                   & $2$ \\
        $2621$  & $2620$        & $1$                   & $2$ \\
        $92377$ & $2844503376$  & $1$                   & $2$ \\ \bottomrule
    \end{tabular}}
    \label{table:calcs}
\end{table}

By \cite{MR2498633} we have that $3$ is the only Tribonacci-Weifrich prime less than $10^{11}$.

\section*{Acknowledgements}
% F. L. worked on this paper during a fellowship at STIAS in the second part of 2023. This author thanks STIAS for hospitality and support.
F. L. was supported in part by the 2024 ERC Synergy Grant ``DynAMiCs''.

\printbibliography

\end{document}